\newtheorem{theorem}{Theorem}
\newtheorem{lemma}{Lemma}
\newtheorem{proposition}{Proposition}
\newcommand{\floor}[1]{\lfloor #1 \rfloor}
\title{Primitive Divisors of Lucas Sequences in Polynomial Rings}
\author{Joaquim Cera Da Concei\c c\~ao}
\address{Normandie Universit\'e, UNICAEN, CNRS, LMNO, 14000 Caen, France}
\email{\tt joaquim.cera-daconceicao@unicaen.fr}
\urladdr{\href{https://jceradaconceicao.github.io}{\tt https://jceradaconceicao.github.io}}
\subjclass{11B39, 11C08, 13A05}
\keywords{Polynomial ring, Lucas sequence, primitive divisor, Zsigmondy's theorem, Carmichael's theorem}
\begin{document}

\begin{abstract}
It is known that all terms $U_n$ of a classical regular Lucas sequence have a primitive prime
divisor if $n>30$ \cite{BHV}. In addition, a complete description of all regular Lucas sequences and their
terms $U_n$, $2\leq n\leq 30$, which do not have a primitive divisor is also known. Here, we prove 
comparable results for Lucas sequences in polynomial rings, correcting some previous theorem
on the same subject. The first part of our paper develops some elements of Lucas theory in
several abstract settings before proving our main theorem in polynomial rings.
\end{abstract}

\maketitle
\tableofcontents

\section{Introduction}

Let $a$ and $b$ be non-zero integers such that $a\ne \pm b$. In 1892, Zsigmondy \cite{Zsi} proved that for all $n\geq 1$ but a few exceptions, there exists a prime number that divides $a^n-b^n$ but not any $a^k-b^k$, for $1\leq k <n$. Such a prime is called a primitive divisor of $a^n-b^n$. In particular, there always exists a primitive prime divisor for $n\geq 7$. Knowing that $a-b$ divides $a^n-b^n$ for all $n\geq 1$, numbers of the form
\begin{equation}\label{EqGen}
    \frac{a^n-b^n}{a-b},
\end{equation}
also have a primitive divisor for $n\geq 7$. Moreover, such a formula defines a second order linear recurrence known as a Lucas sequence. Indeed, a Lucas sequence $U=(U_n)_{n\geq 0}$ with parameters $P,Q\in\mathbb{Z}$ is a sequence with initial terms $U_0=0$ and $U_1=1$ satisfying
\[
U_{n+2}=PU_{n+1}-QU_n,
\]
for all $n\geq 0$. The polynomial $f(X)=X^2-PX+Q$ is the characteristic polynomial of $U$. We denote its discriminant $P^2-4Q$ by $\Delta$. If $a$ and $b$ are the roots of $f$ in $\mathbb{Q}(\sqrt{\Delta})$, then either
\[
U_n=na^{n-1} \quad \text{or} \quad U_n=\frac{a^n-b^n}{a-b},
\]
for all $n\geq 0$, depending on whether $a= b$ or $a\ne b$ respectively. Therefore, \eqref{EqGen} leads to another generalization of Zsigmondy's theorem with $a$ and $b$ not only integers, but also quadratic conjugates: is there an $n_0\geq 1$ such that $U_n$ has a primitive prime divisor $p\nmid \Delta = (a-b)^2$ for all $n\geq n_0$? If $n_0$ exists, can we classify all sequences $U$ and integers $1\leq n\leq n_0$ for which $U_n$ fails to have a primitive divisor?

A partial answer was given by Carmichael \cite{Ca} in the case $\gcd(P,Q)=1$ and $\Delta> 0$, where $n_0=13$. For instance, let $P=1$ and $Q=-1$. Then $U$ is the Fibonacci sequence $F$ which has the form
\[
F_n = \frac{\phi^n-\Bar{\phi}^n}{\sqrt{5}},
\]
for all $n\geq 0$, where $\phi$ is the golden ratio and $\Bar{\phi}$ its quadratic conjugate. Carmichael showed that $F$ has a primitive prime divisor that does not divide $\Delta$ for all $n\not \in \{1,2,5,6,12\}$. A full answer was given by Bilu, Hanrot, and Voutier \cite{BHV} when $P$ and $Q$ are relatively prime. Their theorem states that $U$ has a primitive prime divisor $p\nmid \Delta$ for all $n\geq 31$. Moreover, they give a full description of the sequences $U$ and indices $1\leq n \leq 30$ for which the theorem fails. Note that all such sequences, except for one, satisfy Carmichael's bound $n_0=13$. Indeed, the only exception is the Lucas sequence with parameters $P=1$ and $Q=2$ for which $U_{30}$ does not have a primitive divisor.

Further generalizations may be considered. Here, we are concerned with polynomial rings $A[T]$ with $A$ a unique factorization domain. This makes $A[T]$ a unique factorization domain in which the irreducible elements are irreducible polynomials. In this setting, we call a {\it primitive prime divisor} of $U_n$ an irreducible polynomial that divides $U_n$, but no $U_k$ for all $1\leq k \leq n$, for $U$ a Lucas sequence. Recently, Sha \cite{Sha} investigated the case of rings $A$ of multivariate polynomials over an arbitrary field $K$ of characteristic $p$, where $p$ is zero or a prime number. In particular, Sha studied the case of Lucas sequences with coprime parameters $P,Q\in A$ and proved the following theorem, see \cite[Theorem 1.4]{Sha}, which we state as Sha did:
\begin{theorem}\label{Sha's Theorem}
    Suppose the characteristic $p > 0$ and let $U'$ be the sequence obtained from $(U_n)_{n\geq 1} $ by deleting the terms $U_n$ with $p\mid n$, then each term of $U'$ beyond the second has a primitive prime divisor. If $p = 0$, then each term of $(U_n)_{n\geq 1} $ beyond the second has a primitive prime divisor.
\end{theorem}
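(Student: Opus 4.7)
The plan is to adapt the classical Bilu--Hanrot--Voutier/Carmichael cyclotomic-factorization machinery to the polynomial-ring setting. First I would pass to a splitting field of $X^2-PX+Q$ over $\operatorname{Frac}(A)$ and write
\[
U_n \;=\; \prod_{\substack{d\mid n\\ d>1}} \Phi_d^*(a,b),
\]
where $\Phi_d^*(a,b)=b^{\varphi(d)}\Phi_d(a/b)$ is the homogeneous $d$-th cyclotomic polynomial evaluated at the roots. A routine symmetric-function argument shows each factor actually lies in $\mathbb{Z}[P,Q]\subseteq A[T]$. Next, introduce the rank of apparition $r(\pi)$ of an irreducible $\pi\in A[T]$, defined as the least $n\geq 1$ with $\pi\mid U_n$. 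Since $\gcd(P,Q)=1$, the standard divisibility identity $\pi\mid U_n \Longleftrightarrow r(\pi)\mid n$ carries over from $\mathbb{Z}$ to $A[T]$. Hence a primitive prime divisor of $U_n$ is precisely an irreducible divisor $\pi$ of $\Phi_n^*(a,b)$ with $r(\pi)=n$.

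The crucial technical input is the \emph{intrinsic prime} lemma: if an irreducible $\pi\in A[T]$ divides $\Phi_n^*(a,b)$ but is not primitive for $U_n$, then $\pi$ divides the integer $n$ viewed inside $A[T]$. Its proof imitates the classical one via the congruence $\Phi_n(X)\equiv \Phi_{n/\ell}(X)^{\ell-1}\pmod{\ell}$ for a prime $\ell\mid n$. Once this lemma is in place, the theorem becomes almost immediate: in $A[T]=K[T_1,\dots,T_s,T]$, the integer $n$ is either a \emph{unit} (when $\operatorname{char}(K)=0$, or when $\operatorname{char}(K)=p>0$ with $p\nmid n$) or \emph{zero} (when $p\mid n$). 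In the unit case, no irreducible of $A[T]$ can divide $n$, so every irreducible factor of $\Phi_n^*(a,b)$ is automatically primitive, and it suffices to exhibit at least one such factor, i.e.\ to show that $\Phi_n^*(a,b)$ is a nonzero non-unit. The excluded case $p\mid n$ is precisely the reason Sha must delete those indices: there the intrinsic-prime lemma becomes vacuous because $n=0$ in $A[T]$, and nothing prevents every factor of $\Phi_n^*(a,b)$ from being non-primitive.

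To finish, I would verify that $\Phi_n^*(a,b)$ is a nonzero non-unit of $A[T]$ for every $n\geq 3$ under consideration. Nonvanishing reduces to ruling out relations $a=\zeta b$ for a primitive $n$-th root of unity $\zeta\in\overline{\operatorname{Frac}(A)}$, which is forced by the coprimality of $P$ and $Q$ together with the fact that $a/b$ must then be a root of unity algebraic over $K$. Non-unitness follows from a degree estimate in $T$: $\deg_T(a^n-b^n)$ grows at least linearly in $n$, whereas the complementary cyclotomic factors $\prod_{d\mid n,\,1<d<n}\Phi_d^*(a,b)$ contribute strictly less. The main obstacle I anticipate is exactly this non-degeneracy check in positive characteristic, where $a$ and $b$ can satisfy unexpected algebraic relations over $K$; handling it cleanly likely demands a case analysis on the placement of the roots $a,b$ inside the algebraic closure of $\operatorname{Frac}(A)$, which is where the polynomial-ring situation departs substantively from the integer one.
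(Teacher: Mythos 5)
There is a genuine gap, and in fact the statement you are trying to prove is the one this paper is written to \emph{refute}: the paper does not prove Theorem~\ref{Sha's Theorem} but exhibits explicit counterexamples to it, and then establishes a corrected version (the main theorem of Section~\ref{Section3}). Your overall architecture --- the factorization $U_n=\prod_{d\mid n,\,d>1}\Phi_d(a,b)$, the rank of apparition, and the reduction of ``$U_n$ has a primitive prime divisor'' to ``$\Phi_n(a,b)$ is a non-unit'' when $p\nmid n$ --- is sound and is essentially what the paper does (Lemma~\ref{LemmaQnUn}). The failure is concentrated in the very last step, which you dispatch with a degree estimate: you claim $\deg_T(a^n-b^n)$ grows at least linearly in $n$ while the proper cyclotomic factors contribute strictly less, so that $\Phi_n(a,b)$ must be a non-unit. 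This is false. When $\deg(P^2)=\deg(Q)$ the leading terms in the factors $P^2-\theta_kQ$ of $\Phi_n(a,b)$ can cancel, and $\Phi_n(a,b)$ can be a non-zero constant even though $a$ and $b$ are transcendental over the base field and $p\nmid n$. Concretely, take $P$ non-constant, $\lambda\in A^\times$, and $Q=P^2-\lambda$ (with $p\notin\{2,3\}$): then $U_3=\Phi_3(a,b)=a^2+ab+b^2=P^2-Q=\lambda$ is a unit, so $U_3$ has no primitive prime divisor; or take $f(X)=X^2-4TX+(3T^2-1)$ over $\mathbb{F}_{7^s}[T]$, where $\Phi_6(a,b)=3$ and $U_6=3U_2U_3$. (This is the same oversight as in Sha's original proof, which asserts without justification that $\Phi_n(a,b)$ has positive degree because a root is transcendental.)

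You also point at the wrong anticipated obstacle: you worry about non-vanishing (degeneracy) in positive characteristic, but that part is unproblematic for a regular sequence; the real work is controlling exactly when a factor $P^2-\theta_kQ$ can be constant. The paper shows, via the identity $\Phi_n(a,b)=\prod_{0<k<n/2,\,(k,n)=1}(P^2-\theta_kQ)$ with $\theta_k=2+\zeta_n^k+\zeta_n^{-k}$ and the injectivity of $k\mapsto\theta_k$ (Lemma~\ref{Lemmaa+a-1}), that at most one such factor can be constant, so failure of primitivity forces $\varphi(n)=2$, i.e.\ $n\in\{3,4,6\}$ (plus the trivial case $n=2$), each with an explicit condition on $P$ and $Q$. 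That is the correct statement; the bound ``beyond the second term'' in Theorem~\ref{Sha's Theorem} cannot be salvaged.
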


However, there seems to be a small mistake towards the end of Sha's proof, invalidating his result. Indeed, his proof relies on the fact that a term $U_n$, for $n\geq 2$, has a primitive divisor if and only if $\Phi_n(a,b)$ has a positive polynomial degree. Here, $a$ and $b$ are the roots of the characteristic polynomial of $U$, and $\Phi_n$ is the $n$-th homogeneous cyclotomic polynomial. In his proof, Sha states that $\Phi_n(a,b)$ must have positive degree, since at least one of $a$ and $b$ is transcendental over the field $K$, and does not give further explanation. Note that this assertion is used in the proof of a similar theorem for {\it Lehmer sequences} \cite[Theorem 1.2]{Sha}, and that Sha deduces another result \cite[Theorem 1.6]{Sha} from Theorem \ref{Sha's Theorem}. Let us give a few counterexamples to this statement and thus to Theorem \ref{Sha's Theorem}. Let $A$ be any multivariate polynomial ring over a field $K$ with characteristic $p\not \in \{2,3\}$, $P\in A$ be non-constant and $\lambda\in K^\times$. Let $U$ be the Lucas sequence with characteristic polynomial
\[
f(X) = X^2-PX+(P^2-\lambda).
\]
Then, the first terms of $U$ are $U_0=0, U_1=1, U_2=P$ and $U_3=\lambda$. Since $\lambda$ is a non-zero constant, we see that $U_3$ does not have a primitive prime divisor. Now, the roots of $f$ are
\[
a=\frac{P- \sqrt{4\lambda-3P^2}}{2} \quad \text{and} \quad b=\frac{P + \sqrt{4\lambda-3P^2}}{2},
\]
which are transcendental over $K$ because $P$ is not a constant. Hence, we obtain $\Phi_3(a,b) = a^2+ab+b^2 = \lambda$, contradicting Sha's statement. For a more specific example, we let $A=\mathbb{F}_q[T]$, where $\mathbb{F}_q$ is the finite field with $q=7^s$ elements, $s\geq 1$. Define $U$ with characteristic polynomial
\[
f(X) = X^2-4TX+(3T^2-1).
\]
We find that $U_6=3U_2U_3$. Thus, $U_6$ has no primitive prime divisor. The roots of $f$ are $a=2T-\sqrt{T^2+1}$ and $b=2T+\sqrt{T^2+1}$, and $\Phi_6(a,b) = 3$.

The object of this paper is to obtain a corrected version of Sha's theorem for $A[T]$, where $A$ is a unique factorization domain. We use the same approach as Yabuta \cite{Ya}, who gave a simplified proof of Carmichael's result.

Section \ref{Section2} is split in two parts. We first define and prove various divisibility properties of Lucas sequences in integral domains. The proofs given are similar, if not the same, as the ones given for the usual Lucas sequences in $\mathbb{Z}$. See \cite[Theorems 38 and 2, Lemma 2, Theorem 3, Lemmas 8 and 7, and Theorem 8]{BaWi} for the $\mathbb{Z}$-analogues of Lemmas \ref{LemmaDegenerate}, \ref{LemmaDivSeq}, and \ref{LemmaQUn=R}, Proposition \ref{PropStrongDivSeq}, Lemmas \ref{LemmaRanknmidQ} and \ref{LemmaUnUn+1ACCP}, and Proposition \ref{PropPdivUrhodivn} respectively. Then, we study prime ideals that divide terms of a Lucas sequence in a unique factorization domain.

We prove our main theorem in Section \ref{Section3}. For a Lucas sequence $U$ and an integer $n\geq 1$, we show that if $p\nmid n$, then $U_n$ has a primitive prime divisor except for $n=1$ and possibly at most one value in $\{2,3,4,6\}$. For each $n$ in $\{2,3,4,6\}$, we give a condition on $P$ and $Q$ for $U_n$ to not have a primitive prime divisor.

Throughout this paper, the letters $n$ and $p$ denote respectively an integer, and zero or a prime number. For an arbitrary field $L$, we let $\Bar{L}$ denote an algebraic closure of $L$ and $\zeta_n \in \Bar{L}$ denote a primitive $n$-th root of unity. For $x$ an element of a ring, we write $(x)$ for the principal ideal generated by $x$. The notation $(m,n)$ is used to denote the greatest common divisor of integers $m$ and $n$.

\section{Basic properties of general Lucas sequences}\label{Section2}

Throughout this section, the letter $\mathscr{R}$ denotes an integral domain with characteristic $p$. Let $\mathscr{K}$ be the fraction field of $\mathscr{R}$. For non-zero $P,Q\in \mathscr{R}$, we consider $f(X)= X^2-PX+Q$ with roots $a,b\in \Bar{\mathscr{K}}$. If $p\ne 2$, then by the discriminant method, we have $\Delta=P^2-4Q$ and 
\[
a=\frac{P+\sqrt{\Delta}}{2} \quad \text{and} \quad b=\frac{P-\sqrt{\Delta}}{2}.
\]
This method does not work in characteristic $2$. However, if $a$ is a root of $f$ then $b=a+P$ is the other root of $f$. Putting $\Delta=P^2-4Q=P^2$ in this case, we see that $a-b=\sqrt{\Delta}$ for all values of $p$. Let $U=U(P,Q)$ be the sequence defined by $U_0=0$, $U_1=1$ and $U_{n+2}=PU_{n+1}-QU_n$ for all integers $n\geq 0$. We see that $U$ has characteristic polynomial $f$ and, with $a$ and $b$ the roots of $f$, we find the explicit classic formula for $U_n$ to be
\[
U_n=\frac{a^n-b^n}{a-b},
\]
or $U_n=na^{n-1}$ for all $n\geq 0$, depending on whether $a\ne b$ or $a=b$ respectively. We call $U$ the {\it Lucas sequence} with parameters $P,Q\in \mathscr{R}$. 

We split this section into two subsections. We first give properties related to Lucas sequences in $\mathscr{R}$. Next, we study the behavior of prime ideals in a Lucas sequence in the case of a unique factorization domain.

\subsection{Integral domains}

We consider two types of Lucas sequences: the degenerate and non-degenerate Lucas sequences. A sequence $U$ such that $U_n=0$ for some positive $n$ is said to be degenerate. Therefore, a Lucas sequence which is not zero for all $n\geq 1$ is called non-degenerate. The following result shows that degeneracy can be expressed as a relation between the roots of the characteristic polynomial:

\begin{lemma}\label{LemmaDegenerate}
    A Lucas sequence $U$ is degenerate if and only if $a=\zeta b$ for some root of unity $\zeta\in \mathscr{K}(a)$, where $\zeta\ne 1$ if $p=0$.
\end{lemma}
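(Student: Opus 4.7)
The plan is to argue both directions via the closed-form expression for $U_n$, splitting on whether $a=b$ or $a\ne b$. In both cases one uses that $Q=ab\ne 0$ forces $a,b\ne 0$, and that $b=P-a\in\mathscr{K}(a)$ so any ratio $a/b$ automatically lives in $\mathscr{K}(a)$.

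For the forward direction, suppose $U_n=0$ for some $n\ge 1$. If $a=b$, then $U_n=na^{n-1}=0$ and, since $a\ne 0$, this forces $n=0$ in $\mathscr{R}$, hence $p\mid n$ and in particular $p>0$; here one takes $\zeta=1$, which is allowed precisely because the caveat $\zeta\ne 1$ only concerns the $p=0$ case. If $a\ne b$, then $U_n=(a^n-b^n)/(a-b)=0$ yields $a^n=b^n$, so $\zeta:=a/b$ is an $n$-th root of unity; it lies in $\mathscr{K}(a)$ via $b=P-a$, and it differs from $1$ because $a\ne b$, so in particular no extra restriction is needed when $p=0$.

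For the converse, assume $a=\zeta b$ with $\zeta\in\mathscr{K}(a)$ a root of unity (and $\zeta\ne 1$ if $p=0$). If $\zeta=1$, we must be in characteristic $p>0$, so $a=b$ and $U_p=pa^{p-1}=0$ shows $U$ is degenerate. If $\zeta\ne 1$, then $a\ne b$, and letting $n$ denote the order of $\zeta$ we get $a^n=\zeta^n b^n=b^n$, hence $U_n=(a^n-b^n)/(a-b)=0$.

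The only genuine subtlety I anticipate is bookkeeping around the $p=2$ / repeated-root edge cases, where the discriminant formula for $a,b$ breaks down; however the preamble to the lemma already sets $\Delta=P^2$ in characteristic $2$ and supplies $b=a+P$, so the identity $b=P-a\in\mathscr{K}(a)$, which is all I really need to conclude $\zeta\in\mathscr{K}(a)$, holds uniformly. Thus the proof should be essentially a clean case split, with the only real point to watch being the asymmetric role of $\zeta=1$: it corresponds to the repeated-root degeneracy $U_p=0$ that is visible only in positive characteristic, which is exactly why the statement excludes $\zeta=1$ when $p=0$.
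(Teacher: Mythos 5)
Your proof is correct and follows essentially the same route as the paper: use the closed form for $U_n$, split on $a=b$ versus $a\ne b$, and observe that $Q=ab\ne 0$ rules out $a=0$ while $b=P-a$ places $\zeta=a/b$ in $\mathscr{K}(a)$. You spell out the two directions and the role of the $\zeta=1$/positive-characteristic case more explicitly than the paper's terse ``if and only if'' chain, but the underlying argument is identical.
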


\begin{proof}
    If $a=b$, then $U_n=na^{n-1}=0$ if and only if $p\mid n$ because $Q\ne 0$. If $a\ne b$, then $U_n=0$ for some $n\geq 1$ if and only if
    \[
    \frac{a^n-b^n}{a-b} =0 \iff \bigg(\frac{a}{b}\bigg)^n = 1 \iff a=\zeta_n b,
    \]
    where $\zeta_n$ is an $n$-th root of unity in $\mathscr{K}(a)$.
\end{proof}

Clearly, if $U_n$ is zero for some $n\geq 1$, it follows that $U$ cannot have primitive divisors past that term. Therefore, we assume throughout this paper that $U$ is non-degenerate.

\begin{proposition}\label{PropUm+n}
    For all $m>n\geq 0$, we have $U_m = U_{n+1}U_{m-n} - QU_n U_{m-n-1}$.
\end{proposition}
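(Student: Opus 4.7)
The plan is to prove the identity by induction on the gap $k=m-n$, keeping $n \geq 0$ arbitrary. This avoids splitting into the cases $a=b$ and $a\neq b$ and uses only the defining recurrence $U_{j+2}=PU_{j+1}-QU_j$ together with the initial values $U_0=0$, $U_1=1$.

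For the base cases I would verify $k=1$ and $k=2$ directly. When $k=1$, the right-hand side equals $U_{n+1}U_1 - QU_nU_0 = U_{n+1}$, which matches $U_m=U_{n+1}$. When $k=2$, since $U_2 = PU_1 - QU_0 = P$, the right-hand side becomes $U_{n+1}P - QU_n \cdot 1 = PU_{n+1}-QU_n$, which is exactly $U_{n+2}$ by the defining recurrence.

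For the inductive step, I would assume the identity holds for gaps $k-1$ and $k$ and derive it for $k+1$. Applying the recurrence to $U_{n+k+1}=PU_{n+k}-QU_{n+k-1}$ and substituting the inductive hypothesis in each term, I would get
\[
U_{n+k+1} = P\bigl(U_{n+1}U_k - QU_nU_{k-1}\bigr) - Q\bigl(U_{n+1}U_{k-1} - QU_nU_{k-2}\bigr).
\]
Regrouping by $U_{n+1}$ and $U_n$ gives
\[
U_{n+k+1} = U_{n+1}(PU_k - QU_{k-1}) - QU_n(PU_{k-1}-QU_{k-2}) = U_{n+1}U_{k+1} - QU_nU_k,
\]
where the last equality uses the recurrence on $(U_k)$ itself. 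Setting $m=n+k+1$ completes the induction.

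I do not expect any real obstacle here, since the identity is a purely formal consequence of the recurrence. The only minor care needed is to start the induction with \emph{two} consecutive base cases (because the inductive step invokes the hypothesis at $k$ and $k-1$), and to ensure the uniform treatment when $n=0$, where $U_0=0$ makes the $QU_nU_{k-1}$ terms vanish — which is consistent with $U_m=U_{n+1}U_{m-n}$ in that edge case.
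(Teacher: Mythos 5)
Your proof is correct and follows essentially the same route as the paper, which simply performs induction on $m$ for fixed $n$ (equivalent to your induction on the gap $k=m-n$), using two base cases and the defining recurrence in the inductive step. Nothing further is needed.
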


\begin{proof}
    Induction on $m\geq n+1$ for a fixed $n\geq 0$.
\end{proof}

Note that this proposition is valid for Lucas sequences in any commutative ring and is widely used throughout this section. Another important tool is the next lemma, which states that $U$ is a divisibility sequence:

\begin{lemma}[Divisibility sequence]\label{LemmaDivSeq}
    For all $m,n\geq 0$, we have $(U_{mn})\subset (U_n)$.
\end{lemma}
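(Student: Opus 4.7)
The plan is to prove the divisibility by induction on $m$, using Proposition~\ref{PropUm+n} as the single identity that drives the argument.

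First I would dispose of the edge cases: if $n=0$, then $U_{mn}=U_0=0\in(U_0)$; and for $m=0$ or $m=1$ and arbitrary $n$, the containment $(U_{mn})\subset(U_n)$ is immediate. So fix $n\geq 1$ and proceed by induction on $m\geq 1$.

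For the inductive step, assume $U_n\mid U_{mn}$ and apply Proposition~\ref{PropUm+n} with the pair $(m+1)n>mn$ in place of $(m,n)$ in that statement. This yields
\[
U_{(m+1)n} \;=\; U_{mn+1}\,U_{n} \;-\; Q\,U_{mn}\,U_{(m+1)n - mn - 1}
\;=\; U_{mn+1}\,U_n \;-\; Q\,U_{mn}\,U_{n-1}.
\]
The first summand is visibly a multiple of $U_n$, and the second is a multiple of $U_n$ by the induction hypothesis on $U_{mn}$. Hence $U_n\mid U_{(m+1)n}$, i.e., $(U_{(m+1)n})\subset(U_n)$, closing the induction.

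I expect no real obstacle here: the only thing to be careful about is the hypothesis $m>n$ in Proposition~\ref{PropUm+n}, which is why one applies it to the indices $(m+1)n$ and $mn$ (valid as soon as $n\geq 1$), and the degenerate cases $n=0$ or $m\in\{0,1\}$ must be handled separately as above. Everything else is the routine two-term decomposition that makes a second-order linear recurrence with $U_0=0$ into a divisibility sequence.
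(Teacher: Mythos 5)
Your proof is correct and is exactly the argument the paper intends: the paper's own proof is the one-line sketch ``induction on $m$, using Proposition~\ref{PropUm+n},'' and your decomposition $U_{(m+1)n}=U_{mn+1}U_n-QU_{mn}U_{n-1}$ is the standard way to carry out that induction. The edge cases and the index check for Proposition~\ref{PropUm+n} are handled properly.
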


\begin{proof}
    By induction on $m\geq 0$, and using Proposition \ref{PropUm+n}.
\end{proof}

We recall that two ideals $I$ and $J$ in $\mathscr{R}$ are said to be coprime if $I+J$ is equal to the whole ring $\mathscr{R}$. For $\mathscr{R}=\mathbb{Z}$, it is known that if $P$ and $Q$ are coprime, then $U$ satisfies interesting and strong properties. Now, we study the properties a Lucas sequence may satisfy when $(P)+(Q)=\mathscr{R}$.

\begin{lemma}\label{LemmaIdealsGen}
    Let $x,y,z\in \mathscr{R}$. If $(y)+(z)=\mathscr{R}$, then $(xy)+(z) = (x)+(z)$.
\end{lemma}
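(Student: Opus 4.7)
The plan is to prove the equality by showing the two inclusions $(xy)+(z)\subset (x)+(z)$ and $(x)+(z)\subset (xy)+(z)$ separately. The first inclusion is essentially immediate: since $xy\in (x)$ and $z\in (z)$, any element of $(xy)+(z)$ is already in $(x)+(z)$. No hypothesis on $y$ and $z$ is needed for this direction.

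For the reverse inclusion, I would use the coprimality hypothesis $(y)+(z)=\mathscr{R}$. This says precisely that there exist $r,s\in\mathscr{R}$ such that $ry+sz=1$. Multiplying both sides by $x$ gives
\[
x = r(xy) + (sx)z,
\]
which exhibits $x$ as an element of $(xy)+(z)$. Since $z$ obviously lies in $(xy)+(z)$ as well, the ideal $(x)+(z)$, generated by $x$ and $z$, is contained in $(xy)+(z)$.

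Combining both inclusions yields the claimed equality. There is no genuine obstacle here: the argument is a direct manipulation of generators, and the only ingredient beyond elementary ideal theory is the standard characterization of coprimality as the existence of a Bézout-type identity $1=ry+sz$ in $\mathscr{R}$.
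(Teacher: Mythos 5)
Your proof is correct and follows essentially the same route as the paper's: both directions rely on the Bézout identity $1=ry+sz$ coming from $(y)+(z)=\mathscr{R}$, which you multiply by $x$ while the paper applies the same decomposition to the coefficient of $x$ in an arbitrary element of $(x)+(z)$. The two arguments are interchangeable.
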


\begin{proof}
    One inclusion is trivial. Let $\alpha\in (x)+(z)$. Then, $\alpha=ax+bz$ for some $a,b\in \mathscr{R}$. But $\mathscr{R}=(y)+(z)$, so $a=uy+vz$ for some $u,v\in \mathscr{R}$. Hence $\alpha = uxy + (b+xv)z \in (xy)+(z)$.
\end{proof}

\begin{lemma}\label{LemmaQUn=R}
    If $(P)+(Q)=\mathscr{R}$, then $(Q)+(U_n)=\mathscr{R}$ for all $n\geq 1$.
\end{lemma}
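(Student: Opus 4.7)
The plan is to proceed by induction on $n$ and use Lemma \ref{LemmaIdealsGen} as the main tool, exploiting the recurrence $U_{n+2}=PU_{n+1}-QU_n$ to control $(Q)+(U_n)$ step by step.

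For the base cases, $n=1$ gives $(Q)+(U_1)=(Q)+(1)=\mathscr{R}$ trivially, and $n=2$ gives $(Q)+(U_2)=(Q)+(P)=\mathscr{R}$ by hypothesis. For the inductive step, suppose $(Q)+(U_n)=\mathscr{R}$ and consider $U_{n+1}=PU_n-QU_{n-1}$. Since the term $QU_{n-1}$ lies in $(Q)$, we get
\[
(Q)+(U_{n+1}) = (Q)+(PU_n).
\]
Now Lemma \ref{LemmaIdealsGen} applies with $x=P$, $y=U_n$, $z=Q$: the hypothesis $(y)+(z)=(U_n)+(Q)=\mathscr{R}$ is exactly the induction hypothesis, so we conclude $(PU_n)+(Q)=(P)+(Q)=\mathscr{R}$, closing the induction.

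An alternative equivalent route is to first observe by induction on the recurrence that $U_n \equiv P^{n-1} \pmod{(Q)}$ for all $n\geq 1$, so that $(Q)+(U_n)=(Q)+(P^{n-1})$, and then apply Lemma \ref{LemmaIdealsGen} repeatedly with $y=P$ and $z=Q$ to peel off factors of $P$ and reduce to $(Q)+(P)=\mathscr{R}$. I don't anticipate any real obstacle here: the only subtlety is ensuring that the inductive hypothesis is applied to the correct factor inside the ideal, which is precisely what Lemma \ref{LemmaIdealsGen} is tailored for. The short, one-line induction above seems to be the cleanest presentation.
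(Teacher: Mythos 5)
Your proof is correct and follows essentially the same route as the paper: induction on $n$, reducing $(Q)+(U_{n+1})$ to $(Q)+(PU_n)$ via the recurrence, and then applying Lemma \ref{LemmaIdealsGen} with $x=P$, $y=U_n$, $z=Q$ together with $(P)+(Q)=\mathscr{R}$. The extra base case $n=2$ and the alternative remark that $U_n\equiv P^{n-1}\pmod{(Q)}$ are harmless variants of the same argument.
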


\begin{proof}
    We proceed by induction on $n\geq 1$. If $n=1$, then $(Q)+(1)=\mathscr{R}$. Assume that the result is true for some $n\geq 1$. We have
    \begin{align*}
        (Q)+(U_{n+1}) &= (Q) + (PU_n-QU_{n-1}) \\
        &= (Q) +(PU_n) \\
        &= (Q)+(P) \\
        &=\mathscr{R}
    \end{align*}
    where we used Lemma \ref{LemmaIdealsGen} with $x=P$, $y=U_n$ and $z=Q$ in the second-to-last step and the assumption $(P)+(Q)=\mathscr{R}$ in the last step. The result follows by induction.
\end{proof}

\begin{proposition}\label{PropStrongDivSeq}
    We have $(P)+(Q)=\mathscr{R}$ if and only if $(U_m)+(U_n)=(U_{(m,n)})$ for all $m,n\geq 0$.
\end{proposition}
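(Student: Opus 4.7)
The statement is a biconditional, and the two directions have very different flavors. The reverse implication collapses to a one-line specialization, while the forward implication requires an ideal-theoretic version of the Euclidean algorithm. I would dispatch them in that order.

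For the reverse direction, specialize the hypothesis $(U_m)+(U_n)=(U_{(m,n)})$ to $m=2$ and $n=3$: this gives $(U_2)+(U_3)=(U_1)=\mathscr{R}$. Direct computation from the recurrence yields $U_2=P$ and $U_3=PU_2-QU_1=P^2-Q$, hence $\mathscr{R}=(P)+(P^2-Q)=(P)+(Q)$, since $P^2\in (P)$.

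For the forward direction, assume $(P)+(Q)=\mathscr{R}$. One inclusion is immediate: since $(m,n)$ divides both $m$ and $n$, Lemma \ref{LemmaDivSeq} gives $(U_m),(U_n)\subset(U_{(m,n)})$, hence $(U_m)+(U_n)\subset (U_{(m,n)})$. For the opposite inclusion, the crucial preliminary step is to establish that $(U_n)+(U_{n+1})=\mathscr{R}$ for every $n\geq 0$. I would prove this by induction on $n$, with the base case $n=0$ being trivial since $U_1=1$. For the induction step, the recurrence $U_{n+1}=PU_n-QU_{n-1}$ gives $(U_n)+(U_{n+1})=(U_n)+(QU_{n-1})$; Lemma \ref{LemmaQUn=R} supplies $(Q)+(U_n)=\mathscr{R}$, so Lemma \ref{LemmaIdealsGen} (applied with $x=U_{n-1}$, $y=Q$, $z=U_n$) reduces this to $(U_n)+(U_{n-1})=\mathscr{R}$ by inductive hypothesis.

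With coprimality of consecutive terms in hand, the remainder is an Euclidean-style reduction on indices. For $m>n\geq 1$, Proposition \ref{PropUm+n} gives $U_m=U_{n+1}U_{m-n}-QU_nU_{m-n-1}$, so $(U_m)+(U_n)=(U_{n+1}U_{m-n})+(U_n)$; invoking the just-proved coprimality together with Lemma \ref{LemmaIdealsGen} reduces this to $(U_{m-n})+(U_n)$. Iterating (and swapping the roles of the two indices whenever the larger drops below the smaller) mirrors the integer Euclidean algorithm on $(m,n)$ exactly, and terminates at $(U_{(m,n)})+(U_0)=(U_{(m,n)})$. The main obstacle is nothing conceptual but rather careful bookkeeping to ensure indices remain nonnegative and the hypothesis $n\geq 1$ is maintained at each reduction step; all genuine ingredients are already supplied by the preceding lemmas.
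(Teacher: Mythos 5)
Your proposal is correct and follows essentially the same route as the paper: the same specialization $(U_2)+(U_3)=(P)+(Q)$ for the reverse direction, the same induction establishing coprimality of consecutive terms via Lemmas \ref{LemmaQUn=R} and \ref{LemmaIdealsGen}, and the same Euclidean reduction on indices via Proposition \ref{PropUm+n}. The only (cosmetic) difference is that you run a subtraction-based Euclidean algorithm, reducing $(U_m)+(U_n)$ to $(U_{m-n})+(U_n)$ one step at a time, whereas the paper writes $m=qn+r$ and reduces to $(U_n)+(U_r)$ in a single division-with-remainder step.
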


\begin{proof}
    For the ``if'' part, $\mathscr{R}=(U_2)+(U_3) = (P)+(P^2-Q)=(P)+(Q)$. For the converse, we start by showing the result holds for $m=n+1$ by induction on $n\geq 0$. The case $n=0$ is trivial, thus assume the result is valid for some integer $n\geq 0$. We have
    \[
    (U_{n+2})+(U_{n+1}) = (PU_{n+1}-QU_n)+(U_{n+1}) = (QU_n)+(U_{n+1}).
    \]
    By Lemma \ref{LemmaQUn=R}, we have $\mathscr{R}=(Q)+(U_{n+1})$. Thus, by Lemma \ref{LemmaIdealsGen} with $x=U_n$, we obtain
    \[
    (U_{n+2})+(U_{n+1})=(U_n)+(U_{n+1}),
    \]
    and we conclude by the induction hypothesis. Next, let $m\geq 0$ be an integer. Since $(m,n)=(n,m)$, we may assume without loss of generality that $m\geq n$ and write $m=qn+r$, where $q\geq 0$ and $0\leq r < n$. By Proposition \ref{PropUm+n}, we have
    \[
    (U_m) = (U_{r+1}U_{nq}-QU_rU_{nq-1}) \subset (U_{nq}) + (QU_rU_{nq-1}).
    \]
    We already know that $(U_{nq})+(U_{nq-1})=\mathscr{R}$, so that
    \[
    (U_{nq}) + (QU_rU_{nq-1}) = (U_{nq}) + (QU_r)
    \]
    by Lemma \ref{LemmaIdealsGen}. Moreover, since $(U_{nq})+(Q) = \mathscr{R}$ by Lemma \ref{LemmaQUn=R}, we obtain
    \[
    (U_{nq}) + (QU_rU_{nq-1}) = (U_{nq}) + (U_r)
    \]
    by Lemma \ref{LemmaIdealsGen}. By Lemma \ref{LemmaDivSeq}, we have $(U_{nq})\subset (U_n)$ and thus
    \[
    (U_m)+(U_n) \subset (U_{nq}) + (U_r)+(U_n) = (U_n)+(U_r).
    \]
    We obtain $(U_m)+(U_n) \subset (U_{(m,n)})$ by applying the Euclidean algorithm to this method. The other inclusion follows trivially from Lemma \ref{LemmaDivSeq}.
\end{proof}

A non-degenerate Lucas sequence $U=U(P,Q)$ with $(P)+(Q)=\mathscr{R}$ is called a {\it regular} Lucas sequence. If $\Delta\ne 0$, then $U$ is called a {\it $\Delta$-regular} Lucas sequence. 


Let $\mathfrak{a}\subsetneq \mathscr{R}$ be an ideal. We define the {\it rank of appearance} of $\mathfrak{a}$ in $U$, denoted by $\rho_U(\mathfrak{a})$ or $\rho$, to be the least integer $n\geq1$ such that $(U_n)\subset \mathfrak{a}$. If the rank does not exist, we write $\rho_U(\mathfrak{a})=+\infty$. Let $D:=(P)+(Q)$.

\begin{lemma}\label{LemmaRanknmidQ}
    If $D+\mathfrak{a}=\mathscr{R}$, then either $(Q) + \mathfrak{a} = \mathscr{R}$ or $\rho_U(\mathfrak{a})=+\infty$.
\end{lemma}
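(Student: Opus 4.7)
The plan is to prove the contrapositive in a sharpened form: assuming $D+\mathfrak{a}=\mathscr{R}$, I want to show that if $(Q)+\mathfrak{a}\ne \mathscr{R}$, then no $U_n$ lies in $\mathfrak{a}$. The natural way to do this is to upgrade Lemma \ref{LemmaQUn=R} to an ``$\mathfrak{a}$-relative'' statement, namely
\[
(Q)+(U_n)+\mathfrak{a}=\mathscr{R} \quad \text{for all } n\ge 1.
\]
Once this identity is established, any inclusion $(U_n)\subset\mathfrak{a}$ would immediately force $(Q)+\mathfrak{a}=\mathscr{R}$, contradicting the hypothesis, so $\rho_U(\mathfrak{a})$ must be infinite.

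To produce the relative identity, I would copy the induction used to prove Lemma \ref{LemmaQUn=R}, but carry the extra ideal $\mathfrak{a}$ along at every step. The base case $n=1$ is trivial since $U_1=1$. For the inductive step, using $U_{n+1}=PU_n-QU_{n-1}$ I rewrite
\[
(Q)+(U_{n+1})+\mathfrak{a}=(Q)+(PU_n)+\mathfrak{a}.
\]
By the inductive hypothesis, $(U_n)$ and $(Q)+\mathfrak{a}$ generate the unit ideal, which is exactly the coprimality needed to apply Lemma \ref{LemmaIdealsGen}. The only subtlety is that Lemma \ref{LemmaIdealsGen} is stated for principal ideals, so I would first record a mild generalization (with the identical proof): if $J+\mathfrak{b}=\mathscr{R}$ then $xJ+\mathfrak{b}=(x)+\mathfrak{b}$, for any element $x$, ideal $J$, and ideal $\mathfrak{b}$. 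Applying this with $x=P$, $J=(U_n)$, $\mathfrak{b}=(Q)+\mathfrak{a}$ collapses $(PU_n)+(Q)+\mathfrak{a}$ to $(P)+(Q)+\mathfrak{a}=D+\mathfrak{a}$, which is $\mathscr{R}$ by hypothesis. This closes the induction.

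Alternatively, and perhaps more conceptually, one could pass to the quotient $\bar{\mathscr{R}}=\mathscr{R}/\mathfrak{a}$: the images satisfy $(\bar P)+(\bar Q)=\bar{\mathscr{R}}$, and since the proofs of Lemmas \ref{LemmaIdealsGen} and \ref{LemmaQUn=R} never use the integral domain hypothesis, applying Lemma \ref{LemmaQUn=R} in $\bar{\mathscr{R}}$ gives $(\bar Q)+(\bar U_n)=\bar{\mathscr{R}}$ for all $n\ge 1$, which is the same conclusion. A reader might object that $\bar{\mathscr{R}}$ need not be an integral domain, which is exactly why I prefer the first, self-contained route. I do not anticipate any genuine obstacle here; the only thing to be careful about is keeping the ideal $\mathfrak{a}$ present throughout the induction, since stripping it away prematurely would reintroduce the hypothesis $(P)+(Q)=\mathscr{R}$ that is weaker than what is available and too strong to be assumed.
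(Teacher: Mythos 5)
Your proof is correct, but it follows a different route from the paper's. The paper assumes $(Q)+\mathfrak{a}\ne\mathscr{R}$, picks a maximal ideal $\mathfrak{m}\supset (Q)+\mathfrak{a}$, and uses the recurrence to get $U_{n+2}\equiv PU_{n+1}\equiv\cdots\equiv P^{n+1}\pmod{\mathfrak{m}}$; since $D+\mathfrak{m}=\mathscr{R}$ and $(Q)\subset\mathfrak{m}$ force $P\notin\mathfrak{m}$, and $\mathfrak{m}$ is prime, no $U_n$ with $n\ge 1$ lies in $\mathfrak{m}\supset\mathfrak{a}$ (the case $\mathfrak{a}=(0)$ is handled separately via non-degeneracy). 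Your argument packages the same underlying observation --- that $U_n$ is congruent to $P^{n-1}$ modulo $Q$ --- as the ideal identity $(Q)+(U_n)+\mathfrak{a}=\mathscr{R}$, proved by the same induction as Lemma \ref{LemmaQUn=R} with $\mathfrak{a}$ carried along, and you correctly flag that this needs Lemma \ref{LemmaIdealsGen} with a non-principal ideal in place of $(z)$, which the identical proof delivers. What your version buys: it avoids invoking the existence of a maximal ideal containing a proper ideal (and hence any appeal to Zorn's lemma), it does not need primality of any ideal, and it treats $\mathfrak{a}=(0)$ uniformly without the non-degeneracy assumption; the cost is the mild restatement of Lemma \ref{LemmaIdealsGen}. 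Your quotient-ring variant is also sound for the reason you give --- the proofs of Lemmas \ref{LemmaIdealsGen} and \ref{LemmaQUn=R} are pure ideal arithmetic valid in any commutative ring --- though note that $(P)+(Q)=\mathscr{R}$ is the \emph{stronger} hypothesis (it implies $D+\mathfrak{a}=\mathscr{R}$, not the other way around), so your closing remark has the comparison inverted even though the point you are making (that one must not assume it) is right.
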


\begin{proof}
    If $\mathfrak{a}$ is the zero ideal, then $\rho_U(\mathfrak{a}) =+\infty$, as $U$ is non-degenerate. Assume $\mathfrak{a}\ne (0)$ and $(Q) + \mathfrak{a} \ne \mathscr{R}$. Then $\mathscr{R}$ is not a field and there exists a maximal ideal $\mathfrak{m}\subset \mathscr{R}$ such that $(Q)+\mathfrak{a} \subset \mathfrak{m}$. We have
    \[
    U_{n+2}-PU_{n+1} = -QU_n \in \mathfrak{m},
    \]
    for all $n\geq 0$. Thus, we have $U_{n+2} \equiv PU_{n+1} \equiv \cdots \equiv P^{n+1} \pmod{\mathfrak{m}}$, but $(Q) \subset \mathfrak{m}$ and $D+\mathfrak{m}=\mathscr{R}$ imply that $(P)\not \subset \mathfrak{m}$. Hence $U_n \not \in \mathfrak{m}$ for all $n\geq 2$. The case $n=1$ is easily verified. It follows that $U_n$ does not belong to $\mathfrak{a}$ for all $n\geq 1$ and that $\rho_U(\mathfrak{a})=+\infty$.
\end{proof}

Throughout the rest of this section, we let the letter $\mathfrak{p}$ denote a prime ideal in $\mathscr{R}$. Next, we study properties analogous to well-known prime divisibility properties in the $\mathscr{R}=\mathbb{Z}$ setting.

\begin{lemma}\label{LemmaUnUn+1ACCP}
    If $D+\mathfrak{p} = \mathscr{R}$, then $(U_{n+1})+(U_n)\not \subset \mathfrak{p}$ for all $n\geq 0$.
\end{lemma}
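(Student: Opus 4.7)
The plan is to induct on $n\geq 0$, using the recurrence to push the obstruction from step $n+1$ back to step $n$, while invoking Lemma \ref{LemmaRanknmidQ} to rule out the degenerate situation where $Q$ lands in $\mathfrak{p}$.

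For the base case $n=0$, we have $U_0=0$ and $U_1=1$, so $(U_1)+(U_0)=\mathscr{R}$, which cannot be contained in the proper ideal $\mathfrak{p}$.

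For the inductive step, suppose the claim holds for some $n\geq 0$ and, for contradiction, suppose both $U_{n+1}$ and $U_{n+2}$ lie in $\mathfrak{p}$. Rearranging the recurrence $U_{n+2}=PU_{n+1}-QU_n$ gives $QU_n = PU_{n+1}-U_{n+2}\in \mathfrak{p}$. Since $\mathfrak{p}$ is prime, either $U_n\in\mathfrak{p}$ or $Q\in\mathfrak{p}$. In the first case, $(U_{n+1})+(U_n)\subset\mathfrak{p}$, contradicting the induction hypothesis. In the second case, $(Q)+\mathfrak{p}=\mathfrak{p}\ne\mathscr{R}$, so Lemma \ref{LemmaRanknmidQ} forces $\rho_U(\mathfrak{p})=+\infty$; but $U_{n+1}\in\mathfrak{p}$ with $n+1\geq 1$ gives $\rho_U(\mathfrak{p})\leq n+1$, a contradiction.

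The argument is essentially a clean induction, and there is no real obstacle provided one is careful that Lemma \ref{LemmaRanknmidQ} applies (which needs $D+\mathfrak{p}=\mathscr{R}$, given as hypothesis) and that $\mathfrak{p}$ is a proper ideal so that $U_1=1\notin\mathfrak{p}$. The only subtlety worth flagging is that the step where we split on $\mathfrak{p}$ being prime is what makes this property specific to prime ideals rather than arbitrary ideals.
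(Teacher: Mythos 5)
Your proof is correct and follows essentially the same route as the paper: induction on $n$, reduction of the inductive step to $QU_n\in\mathfrak{p}$ via the recurrence, primality of $\mathfrak{p}$, and Lemma \ref{LemmaRanknmidQ} to exclude $Q\in\mathfrak{p}$. The only cosmetic difference is that the paper disposes of the case $\rho_U(\mathfrak{p})=+\infty$ up front and then cites Lemma \ref{LemmaRanknmidQ} once, whereas you derive the same contradiction inside the inductive step from $U_{n+1}\in\mathfrak{p}$; both are fine.
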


\begin{proof}
    Let $\rho=\rho_U(\mathfrak{p})$. If $\rho=+\infty$, there is nothing to do. If $\rho<+\infty$, we proceed by induction on $n\geq 0$. If $n=0$, then $(0)+(1)=\mathscr{R} \not \subset \mathfrak{p}$. Assume the result holds for some $n\geq 0$ and that
    \begin{equation}\label{eqAbsurdInc}
        (U_{n+2})+(U_{n+1}) = (QU_n) + (U_{n+1}) \subset \mathfrak{p},
    \end{equation}
    by contradiction. We find that $(QU_n)\subset \mathfrak{p}$. Since $\mathfrak{p}$ is a prime ideal, $QU_n\in \mathfrak{p}$ implies that $Q$ or $U_n$ belongs to $\mathfrak{p}$. However, since the rank of $\mathfrak{p}$ exists, we must have $(Q)\not \subset \mathfrak{p}$ by Lemma \ref{LemmaRanknmidQ}. Thus we conclude that $(U_n)\subset \mathfrak{p}$. By \eqref{eqAbsurdInc}, we have $(U_{n+1})\subset \mathfrak{p}$ and therefore
    \[
    (U_{n+1})+(U_n) \subset \mathfrak{p},
    \]
    contradicting the induction hypothesis. Hence $(U_{n+2})+(U_{n+1}) \not  \subset \mathfrak{p}$.
\end{proof}

\begin{proposition}\label{PropPdivUrhodivn}
    If $D+\mathfrak{p}=\mathscr{R}$ and the rank $\rho_U(\mathfrak{p})$ exists, then $(U_n)\subset \mathfrak{p}$ if and only if $\rho_U(\mathfrak{p})\mid n$.
\end{proposition}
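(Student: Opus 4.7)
The plan is to prove both directions with the ``if'' direction being immediate and the ``only if'' direction following the standard Euclidean-division argument. Write $\rho = \rho_U(\mathfrak{p})$ throughout.

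For the ``if'' direction, suppose $\rho \mid n$, so $n = k\rho$ for some $k \geq 0$. Then Lemma \ref{LemmaDivSeq} gives $(U_n) = (U_{k\rho}) \subset (U_\rho) \subset \mathfrak{p}$, as desired (the case $n=0$ being trivial since $U_0 = 0 \in \mathfrak{p}$).

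For the ``only if'' direction, assume $(U_n) \subset \mathfrak{p}$ and perform Euclidean division: write $n = q\rho + r$ with $0 \leq r < \rho$. I want to show $r = 0$. If $r = 0$ we are done, so suppose for contradiction that $r > 0$, which makes $n > q\rho$. Then I apply Proposition \ref{PropUm+n} with $m = n$ and the index $q\rho$ to obtain
\[
U_n = U_{q\rho + 1}\, U_r - Q\, U_{q\rho}\, U_{r-1}.
\]
By Lemma \ref{LemmaDivSeq}, $U_{q\rho} \in \mathfrak{p}$, and by hypothesis $U_n \in \mathfrak{p}$, so the identity forces $U_{q\rho+1}\, U_r \in \mathfrak{p}$. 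Since $\mathfrak{p}$ is prime, either $U_{q\rho+1} \in \mathfrak{p}$ or $U_r \in \mathfrak{p}$.

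The first possibility is ruled out by Lemma \ref{LemmaUnUn+1ACCP}: together with $U_{q\rho} \in \mathfrak{p}$ it would give $(U_{q\rho+1}) + (U_{q\rho}) \subset \mathfrak{p}$, contradicting that lemma (which applies because $D + \mathfrak{p} = \mathscr{R}$). Hence $U_r \in \mathfrak{p}$, but $1 \leq r < \rho$ contradicts the minimality of $\rho = \rho_U(\mathfrak{p})$. Therefore $r = 0$ and $\rho \mid n$. No step here looks particularly hard; the only subtlety is remembering that the hypothesis $D + \mathfrak{p} = \mathscr{R}$ is precisely what lets us invoke Lemma \ref{LemmaUnUn+1ACCP}, which in turn is the ingredient that makes $U_{q\rho+1} \notin \mathfrak{p}$.
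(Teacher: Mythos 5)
Your proof is correct, and it follows the same overall strategy as the paper: Euclidean division $n=q\rho+r$, an application of Proposition \ref{PropUm+n}, primality of $\mathfrak{p}$, Lemma \ref{LemmaUnUn+1ACCP}, and minimality of the rank. The one genuine difference is the choice of split point in the addition formula. The paper takes $U_n = U_{r+1}U_{q\rho} - QU_rU_{q\rho-1}$, so after removing the term containing $U_{q\rho}$ it is left with $QU_rU_{q\rho-1}\in\mathfrak{p}$ and must rule out \emph{two} factors: $U_{q\rho-1}\in\mathfrak{p}$ (via Lemma \ref{LemmaUnUn+1ACCP}) and $Q\in\mathfrak{p}$ (which requires Lemma \ref{LemmaRanknmidQ} and the hypothesis that the rank exists). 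Your split $U_n = U_{q\rho+1}U_r - QU_{q\rho}U_{r-1}$ puts $Q$ and $U_{q\rho}$ in the same discarded term, so you are left with $U_{q\rho+1}U_r\in\mathfrak{p}$ and only need to exclude $U_{q\rho+1}\in\mathfrak{p}$, again by Lemma \ref{LemmaUnUn+1ACCP}. This makes your argument slightly more economical: it never uses $Q\notin\mathfrak{p}$, so Lemma \ref{LemmaRanknmidQ} is not needed at this point (the hypothesis that the rank exists is of course still needed to make sense of $\rho$). The small price is that you must first dispose of the case $r=0$ so that Proposition \ref{PropUm+n} applies with $m=n>q\rho$ and so that $U_{r-1}$ is defined, which you do correctly by framing the argument as a contradiction.
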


\begin{proof}
    The ``if'' part follows from Lemma \ref{LemmaDivSeq}. Assume that $(U_n)\subset \mathfrak{p}$. By minimality of the rank, we must have $n\geq \rho:=\rho_U(\mathfrak{p})$. Hence $n=q\rho+r$ for some $q\geq 1$ and $0\leq r <\rho$. By Proposition \ref{PropUm+n}, we have
    \[
    U_n=U_{r+1}U_{q \rho } -Q U_r U_{q\rho -1}.
    \]
    By Lemma \ref{LemmaDivSeq}, this implies that $(Q U_r U_{q\rho -1}) \subset \mathfrak{p}$. Since $D+ \mathfrak{p}=\mathscr{R}$, we use Lemma \ref{LemmaUnUn+1ACCP} to find that $(U_{q \rho}) + (U_{q\rho -1}) \not \subset \mathfrak{p}$. Since $(U_{q \rho}) \subset \mathfrak{p}$, we obtain that $(U_{q\rho -1}) \not \subset \mathfrak{p}$. Thus, we have $Q U_r U_{q\rho -1} \in \mathfrak{p}$ and $Q U_{q\rho -1} \not \in \mathfrak{p}$. Since $\mathfrak{p}$ is a prime ideal, we find that $U_r\in \mathfrak{p}$, so $(U_r) \subset \mathfrak{p}$. But $r<\rho$ implies that $r=0$ and $n=q\rho$ by minimality of the rank.
\end{proof}

\subsection{Unique factorization domains}

Let $R$ be a unique factorization domain with characteristic $p$. Hence, irreducible elements $x\in R$ are prime elements, that we call primes. We denote both $x$ and the prime ideal $(x)$ by $\mathfrak{p}$ as a shorthand. We let $v_\mathfrak{p}$ denote the $\mathfrak{p}$-adic valuation, the function defined by
\[
v_\mathfrak{p}(x) = \max\{n\geq 1 : \mathfrak{p}^n \mid x\},
\]
for all non-zero $x\in R$, and $v_{p}(0)=-\infty$. Moreover, a valuation satisfies the following properties:
\[
v_\mathfrak{p}(xy)=v_\mathfrak{p}(x)+v_\mathfrak{p}(y) \quad \text{and} \quad v_\mathfrak{p}(x+y)\geq \min(v_\mathfrak{p}(x), v_\mathfrak{p}(y) ),
\]
for all $x,y\in R$. We recall that a valuation and its properties can be extended to the field of fractions of $R$ by $v_\mathfrak{p}(x/y) = v_\mathfrak{p}(x)-v_\mathfrak{p}(y)$ for all $x,y\in R$.

Let $U$ be a regular Lucas sequence with parameters $P,Q\in R$, i.e., it satisfies $(P)+(Q)=R$. By Proposition \ref{PropPdivUrhodivn},  we know that if $\rho_U(\mathfrak{p})$ exists, then $\mathfrak{p}$ divides a term $U_n$ if and only if $\rho_U(\mathfrak{p})$ divides $n$. The aim of this subsection is to describe the behavior of $v_\mathfrak{p}(U_n)$ for all integers $n\geq 1$ divisible by $\rho_U(\mathfrak{p})$.

\begin{lemma}\label{LemmaUpn=}
    Assume $p>0$. For all $i,n\geq 0$, we have $U_{p^i n}=\Delta^{\frac{p^i-1}{2}} U_n^{p^i}$. 
\end{lemma}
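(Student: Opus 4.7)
The plan is to work directly from the explicit closed-form expression for $U_n$ and exploit the Frobenius identity in characteristic $p$, namely that $(x-y)^{p^i} = x^{p^i} - y^{p^i}$ for all $x,y$ in a ring of characteristic $p$ (which follows by induction on $i$ from the fact that the binomial coefficients $\binom{p}{k}$ vanish for $1\leq k\leq p-1$). All subsequent computations take place in the algebraic closure $\overline{\mathscr{K}}$ of the fraction field, and then one verifies that the resulting identity actually lives in $\mathscr{R}$ because both sides equal $U_{p^i n}$.

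First I would split into the two cases coming from the dichotomy $a=b$ versus $a\neq b$ in the explicit formula. When $a\neq b$, I would write
\[
U_{p^i n} \;=\; \frac{a^{p^i n}-b^{p^i n}}{a-b} \;=\; \frac{(a^n)^{p^i}-(b^n)^{p^i}}{a-b} \;=\; \frac{(a^n-b^n)^{p^i}}{a-b},
\]
using Frobenius in the last step. Factoring out $(a-b)^{p^i}$ from the numerator gives
\[
U_{p^i n} \;=\; \left(\frac{a^n-b^n}{a-b}\right)^{p^i}(a-b)^{p^i-1} \;=\; U_n^{p^i}\,(a-b)^{p^i-1},
\]
and then I would invoke the relation $(a-b)^2=\Delta$, which was established in the preliminaries of the section (and holds in every characteristic, since $a-b=\sqrt{\Delta}$ is the paper's convention, so the notation $\Delta^{(p^i-1)/2}$ is to be read as $(a-b)^{p^i-1}$).

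For the degenerate case $a=b$, I would simply note that $\Delta=(a-b)^2=0$ and $U_n=na^{n-1}$. For $i=0$ the identity is trivial. For $i\geq 1$ we have $U_{p^i n}=p^i n\,a^{p^i n -1}=0$ because $p^i=0$ in $\mathscr{R}$, while on the right-hand side $\Delta^{(p^i-1)/2}=0$ as well, so both sides vanish.

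The only subtle point, and the main thing to handle with care, is the meaning of $\Delta^{(p^i-1)/2}$ in characteristic $2$, where the exponent is not an integer. I would address this up front by reminding the reader of the convention fixed at the start of the section, namely $\sqrt{\Delta}:=a-b$, so that $\Delta^{(p^i-1)/2}$ is interpreted uniformly as $(a-b)^{p^i-1}\in\mathscr{R}$; once that convention is in place the argument above is characteristic-uniform and no separate treatment of $p=2$ is required.
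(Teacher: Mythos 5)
Your proof is correct and follows essentially the same route as the paper: apply the Frobenius identity to $a^{p^i n}-b^{p^i n}$, factor out $(a-b)^{p^i-1}=\Delta^{(p^i-1)/2}$, and rely on the convention $a-b=\sqrt{\Delta}$ fixed at the start of Section~\ref{Section2} to cover characteristic $2$. The only difference is that you treat the case $a=b$ separately, which the paper dispenses with by invoking the standing non-degeneracy (indeed regularity) assumption together with Lemma~\ref{LemmaDegenerate}; your extra case is harmless but not needed.
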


\begin{proof}
    Since $U$ is regular, it is non-degenerate. By Lemma \ref{LemmaDegenerate}, and because $p>0$, we have $a-b=\sqrt{\Delta}\ne 0$ and
    \[
    U_{p^in} = \frac{a^{p^in}-b^{p^in}}{a-b} = \frac{(a^n-b^n)^{p^i}}{a-b} = (a-b)^{p^i-1} U_n^{p^i} =\Delta^{\frac{p^i-1}{2}} U_n^{p^{i}}.
    \]
\end{proof}

\begin{lemma}\label{LemmaRho=p}
    If $p>0$ and $\rho_U(\mathfrak{p})$ exists, then $\mathfrak{p}\mid \Delta$ if and only if $\rho_U(\mathfrak{p}) = p$.
\end{lemma}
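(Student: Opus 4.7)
The plan is to reduce the recurrence modulo $\mathfrak{p}$ and exploit the fact that $R/\mathfrak{p}$ is an integral domain of characteristic $p$. Note at the outset that, since $U$ is regular and $\rho_U(\mathfrak{p})$ exists, Lemma \ref{LemmaRanknmidQ} gives $\mathfrak{p}\nmid Q$, a fact I will use throughout.

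For the direction $\rho_U(\mathfrak{p})=p\Rightarrow\mathfrak{p}\mid\Delta$ in odd characteristic, Lemma \ref{LemmaUpn=} applied with $n=1$ and $i=1$ yields $U_p=\Delta^{(p-1)/2}$, so $\mathfrak{p}\mid U_p$ together with primality of $\mathfrak{p}$ forces $\mathfrak{p}\mid\Delta$.

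For the converse with $p$ odd, I first check that $\mathfrak{p}\nmid P$: otherwise $\mathfrak{p}\mid P^2-\Delta=4Q$, and since $4$ is a unit modulo $\mathfrak{p}$, this would yield $\mathfrak{p}\mid Q$, contradicting what was noted above. Reducing $f$ modulo $\mathfrak{p}$, the discriminant vanishes, so $\bar{f}(X)=(X-\alpha)^2$ with $\alpha=\bar{P}/2\neq 0$ in $R/\mathfrak{p}$. A straightforward induction on $n$, using $\bar{P}=2\alpha$, $\bar{Q}=\alpha^2$ and the initial values $\bar{U}_0=0$, $\bar{U}_1=1$, establishes the closed form $\bar{U}_n=n\alpha^{n-1}$ in $R/\mathfrak{p}$. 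Since $\alpha\neq 0$ and $R/\mathfrak{p}$ is an integral domain of characteristic $p$, the identity $\bar{U}_n=0$ holds precisely when $p\mid n$, which gives $\rho_U(\mathfrak{p})=p$.

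The characteristic $2$ case must be handled separately, because Lemma \ref{LemmaUpn=} is awkward there (the exponent $(p-1)/2$ is not an integer). Happily it is immediate: $\Delta=P^2$ and $U_2=P$, so
\[
\mathfrak{p}\mid\Delta\;\iff\;\mathfrak{p}\mid P\;\iff\;\mathfrak{p}\mid U_2\;\iff\;\rho_U(\mathfrak{p})=2,
\]
where the middle equivalence uses that $\mathfrak{p}$ is generated by a prime element of the UFD $R$, and the last uses $\mathfrak{p}\nmid U_1=1$. The only non-routine step in the whole argument is establishing the closed form $\bar{U}_n=n\alpha^{n-1}$ inside the ring $R/\mathfrak{p}$ itself (rather than in its fraction field), but this reduces to a two-step induction and is not a real obstacle.
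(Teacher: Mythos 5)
Your proof is correct. The forward direction (rank $p$ implies $\mathfrak{p}\mid\Delta$) is the same as the paper's, via $U_p=\Delta^{(p-1)/2}$ from Lemma \ref{LemmaUpn=}, and your characteristic-$2$ bookkeeping matches the paper's parenthetical remark that $U_2=P$ and $\Delta=P^2$ there. Where you genuinely diverge is the converse. The paper argues in one line: $\mathfrak{p}\mid\Delta$ gives $\mathfrak{p}\mid U_p=\Delta^{(p-1)/2}$, hence $\rho_U(\mathfrak{p})\mid p$ by Proposition \ref{PropPdivUrhodivn}, and $\rho_U(\mathfrak{p})\ne 1$ since $U_1=1$, so $\rho_U(\mathfrak{p})=p$. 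You instead pass to the integral domain $R/\mathfrak{p}$, observe that the reduced characteristic polynomial has a double nonzero root $\alpha=\bar P/2$ (your preliminary check that $\mathfrak{p}\nmid P$, using $\mathfrak{p}\nmid Q$ from Lemma \ref{LemmaRanknmidQ} and the invertibility of $4$ in characteristic $p>2$, is exactly what is needed here), and derive the degenerate closed form $\bar U_n=n\alpha^{n-1}$, reading off the rank directly. Your route is longer but more self-contained: it bypasses Proposition \ref{PropPdivUrhodivn} entirely and in fact recovers the full set $\{n: \mathfrak{p}\mid U_n\}=p\mathbb{Z}$, mirroring the degenerate case of Lemma \ref{LemmaDegenerate} inside the quotient ring. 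The paper's route is shorter because it reuses the rank-divisibility machinery already established. Both arguments are sound.
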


\begin{proof}
    By Lemma \ref{LemmaUpn=}, we have $U_p = \Delta^{(p-1)/2}$. (Note that $U_2=P$ and $\Delta=P^2$ in characteristic $2$.) Thus, all primes of rank $p$ divide $\Delta$. For the converse, we find that $\rho_U(\mathfrak{p}) \mid p$ by Proposition \ref{PropPdivUrhodivn}. Hence $\rho_U(\mathfrak{p})=p$ since $U_1=1$.
\end{proof}

\begin{theorem}\label{THMvaluationLucas}
    Assume $p>0$ and $\rho:=\rho_U(\mathfrak{p})$ exists. Then
    \[
    v_\mathfrak{p}(U_{\rho n}) = p^{v_p(n)} v_\mathfrak{p}(U_\rho) + \frac{(p^{v_p(n)}-1)v_\mathfrak{p}(\Delta)}{2}.
    \]
\end{theorem}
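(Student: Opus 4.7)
The plan is to reduce the formula to the case $\gcd(n,p) = 1$ via Lemma \ref{LemmaUpn=}, and then handle that reduced case by recognizing $U_{\rho m}/U_\rho$ as itself a Lucas sequence to which the earlier machinery applies.

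First, write $n = p^k m$ with $k = v_p(n)$ and $\gcd(m,p) = 1$. Lemma \ref{LemmaUpn=} then gives $U_{\rho n} = \Delta^{(p^k - 1)/2} U_{\rho m}^{p^k}$, so
\[
v_\mathfrak{p}(U_{\rho n}) = \frac{(p^k - 1) v_\mathfrak{p}(\Delta)}{2} + p^k v_\mathfrak{p}(U_{\rho m}).
\]
Matching against the claimed identity, it will suffice to show $v_\mathfrak{p}(U_{\rho m}) = v_\mathfrak{p}(U_\rho)$ whenever $\gcd(m,p) = 1$.

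For this reduced case, I would introduce the auxiliary Lucas sequence $S$ over $R$ with parameters $V_\rho := a^\rho + b^\rho$ and $Q^\rho$, whose characteristic roots are $a^\rho$ and $b^\rho$. Since $V_\rho$ is a symmetric polynomial in $a, b$, it lies in $R$, and
\[
S_m = \frac{a^{\rho m} - b^{\rho m}}{a^\rho - b^\rho} = \frac{U_{\rho m}}{U_\rho},
\]
so the claim becomes $\mathfrak{p} \nmid S_m$ for every $m$ coprime to $p$. One then needs to check that $S$ satisfies the hypotheses of the previous subsection: $S$ is non-degenerate (otherwise $a^\rho/b^\rho$ would be a root of unity, forcing $U$ itself to be degenerate by Lemma \ref{LemmaDegenerate}), and $D_S + \mathfrak{p} = R$, because raising a B\'ezout relation $1 = cQ + dx$ furnished by Lemma \ref{LemmaRanknmidQ} (with $\mathfrak{p} = (x)$) to the $\rho$-th power yields $1 \equiv c^\rho Q^\rho \pmod{\mathfrak{p}}$, so in particular $(Q^\rho) + \mathfrak{p} = R$.

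The discriminant of $S$ is $\tilde{\Delta} = V_\rho^2 - 4Q^\rho = (a^\rho - b^\rho)^2 = U_\rho^2 \Delta$, which lies in $\mathfrak{p}$ since $\mathfrak{p} \mid U_\rho$. Lemma \ref{LemmaUpn=} applied to $S$ then gives $S_p = \tilde{\Delta}^{(p-1)/2} \in \mathfrak{p}$, so $\rho_S(\mathfrak{p})$ exists, and Lemma \ref{LemmaRho=p} forces $\rho_S(\mathfrak{p}) = p$. Proposition \ref{PropPdivUrhodivn} applied to $S$ then gives $\mathfrak{p} \nmid S_m$ for every $m$ with $\gcd(m,p) = 1$, which completes the reduction. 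The main obstacle throughout is the verification that $S$ satisfies the regularity-type hypotheses, in particular the promotion from $(Q) + \mathfrak{p} = R$ to $(Q^\rho) + \mathfrak{p} = R$ and the non-degeneracy of $S$, both of which are essentially traced back to the analogous property of $U$.
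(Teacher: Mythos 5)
Your proof is correct, but it follows a genuinely different route from the paper's. The paper writes $n=\lambda p^u$ with $p\nmid\lambda$ and invokes the strong divisibility property (Proposition \ref{PropStrongDivSeq}) to get $(U_{\rho n})+(U_{\rho p^{u+1}})=(U_{\rho p^u})$; taking $\mathfrak{p}$-adic valuations turns this into a minimum identity, and since Lemma \ref{LemmaUpn=} computes $v_\mathfrak{p}(U_{\rho p^u})$ and $v_\mathfrak{p}(U_{\rho p^{u+1}})$ explicitly (the latter strictly larger), the minimum pins down $v_\mathfrak{p}(U_{\rho n})=v_\mathfrak{p}(U_{\rho p^u})$, which is the stated formula. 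You instead perform the same first reduction via Lemma \ref{LemmaUpn=} but then prove the residual claim $v_\mathfrak{p}(U_{\rho m})=v_\mathfrak{p}(U_\rho)$ for $p\nmid m$ by passing to the quotient sequence $S_m=U_{\rho m}/U_\rho$ with parameters $(V_\rho, Q^\rho)$ and showing that $\mathfrak{p}$ has rank exactly $p$ in $S$ --- the classical ``law of repetition'' argument. Your verifications all go through: $S$ is non-degenerate since $a^\rho/b^\rho$ a root of unity would force $U$ degenerate; $(Q^\rho)+\mathfrak{p}=R$ follows from Lemma \ref{LemmaRanknmidQ}; $\tilde\Delta=\Delta U_\rho^2\in\mathfrak{p}$; and then Lemmas \ref{LemmaUpn=} and \ref{LemmaRho=p} together with Proposition \ref{PropPdivUrhodivn} applied to $S$ give $\mathfrak{p}\nmid S_m$. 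Two small remarks: Lemmas \ref{LemmaUpn=} and \ref{LemmaRho=p} are stated under the standing assumption that the sequence is regular, and $S$ need not be regular globally, but their proofs only use non-degeneracy and the local condition $D+\mathfrak{p}=R$, both of which you establish for $S$; and for $p=2$ the expression $\tilde\Delta^{(p-1)/2}$ must be read as $\sqrt{\tilde\Delta}=V_\rho=S_2$, consistent with the paper's own convention. As for what each approach buys: the paper's proof is a three-line consequence of Proposition \ref{PropStrongDivSeq} and hence leans on the global hypothesis $(P)+(Q)=R$, whereas yours is longer but only needs $D+\mathfrak{p}=R$ at the single prime in question, so it would extend to non-regular sequences at primes coprime to $D$, and it exposes the structural reason behind the formula, namely that $\mathfrak{p}$ has rank $p$ in the quotient sequence.
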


\begin{proof}
    Write $n=\lambda p^u$ for some integers $\lambda\geq 1$, $p\nmid \lambda$, and $u\geq 0$. Since $U$ is a regular sequence, by Proposition \ref{PropStrongDivSeq}, we have $(U_n, U_{\rho p^{u+1}}) = U_{(n, \rho p^{u+1})} = U_{\rho p^u}$. The result follows by taking the $\mathfrak{p}$-adic valuation and using Lemma \ref{LemmaUpn=}.
    
\end{proof}

The next theorem deals with the case of a unique factorization domain and a polynomial ring $R$ of characteristic zero. The key point is that $R$ is a polynomial ring and we can use the notion of polynomial degree. In particular, any irreducible element $\mathfrak{p}$ is an irreducible polynomial with positive degree. Note that the theorem is not valid otherwise, as \cite[Sect. 2.4, Lemma 11]{BaWi} shows for $R=\mathbb{Z}$.

\begin{theorem}\label{ThmValp=0}
    Assume that $R$ is a polynomial ring with $p=0$, and that $\rho_U(\mathfrak{p})$ exists. Then, we have $v_\mathfrak{p}(U_{\rho n}) = v_\mathfrak{p}(U_\rho)$ for all $n\geq 1$.
\end{theorem}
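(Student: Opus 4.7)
The plan is to reduce the theorem to showing that the auxiliary Lucas sequence $W_n:=U_{\rho n}/U_\rho$ has no term divisible by $\mathfrak{p}$ for $n\geq 1$. Setting $\alpha:=a^\rho$ and $\beta:=b^\rho$, one computes $W_n=\sum_{i=0}^{n-1}\alpha^i\beta^{n-1-i}$, a symmetric polynomial in $\alpha,\beta$ with integer coefficients, which therefore lies in $R$ because $\alpha+\beta=a^\rho+b^\rho$ and $\alpha\beta=Q^\rho$ both belong to $R$. Granted $W_n\in R$ and $v_\mathfrak{p}(W_n)=0$, the conclusion $v_\mathfrak{p}(U_{\rho n})=v_\mathfrak{p}(U_\rho)$ follows at once from $U_{\rho n}=U_\rho W_n$.

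Before tackling $W_n$, I would dispose of the degenerate possibility $a=b$. Here $\Delta=0$, $P=2a$, $Q=a^2$, and regularity forces $(2a)+(a^2)=R$; in a polynomial ring of characteristic zero, $2$ is a unit, so this equality forces $a$ itself to be a unit. Then $U_n=na^{n-1}$ is a unit times a nonzero integer, and since $\mathfrak{p}$ has positive polynomial degree it cannot divide $n$, so $\rho_U(\mathfrak{p})$ would be infinite, contradicting the hypothesis. Hence we may assume $a\neq b$ throughout.

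To prove $v_\mathfrak{p}(W_n)=0$, I would extend $v_\mathfrak{p}$ to a valuation $v$ on $\bar{\mathscr{K}}$, choosing the integral closure $S$ of $R$ inside $\mathscr{K}(a,b)$ and a prime $\mathfrak{P}$ of $S$ lying above $\mathfrak{p}$. Two estimates then combine: first, $v(\alpha)=v(\beta)=0$ because $\alpha\beta=Q^\rho$ and $\mathfrak{p}\nmid Q$ by Lemma \ref{LemmaRanknmidQ}; second, $\alpha-\beta=(a-b)U_\rho$ satisfies $v(\alpha-\beta)>0$ since $v(U_\rho)\geq 1$ and $v(a-b)\geq 0$. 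Reducing $W_n=\sum_{i=0}^{n-1}\alpha^i\beta^{n-1-i}$ modulo $\alpha-\beta$ yields $W_n\equiv n\beta^{n-1}$, and the non-Archimedean triangle inequality then forces $v(W_n)=v(n\beta^{n-1})=v(n)$.

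The crux, and the step that explains why the hypothesis on $R$ is essential, is the final claim $v(n)=0$. Because $\mathfrak{p}$ is an irreducible polynomial of positive degree while $n$ is a nonzero constant, $\mathfrak{p}$ cannot divide $n$; this is precisely where the integer analogue of the theorem would instead pick up the classical ``lifting the exponent'' contribution $v_\mathfrak{p}(n)$ that is absent here. The only other technical care required is setting up the extension of $v_\mathfrak{p}$ to an integral closure containing $a$ and $b$, which is standard.
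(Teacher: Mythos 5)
Your proof is correct, but it takes a genuinely different route from the paper's. The paper stays entirely inside $R$: it introduces the companion sequence $V_n=a^n+b^n$, invokes the classical identities $2^{n-1}U_{\rho n}=\sum_{i}\binom{n}{2i+1}\Delta^iU_\rho^{2i+1}V_\rho^{n-2i-1}$ and $V_\rho^2-\Delta U_\rho^2=4Q^\rho$, deduces $\mathfrak{p}\nmid V_\rho$ from the latter together with Lemma \ref{LemmaRanknmidQ}, and reads off $v_\mathfrak{p}(U_{\rho n})=v_\mathfrak{p}(U_\rho)$ from the congruence $2^{n-1}U_{\rho n}\equiv nU_\rho V_\rho^{n-1}\pmod{U_\rho^2}$. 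You instead factor $U_{\rho n}=U_\rho W_n$ with $W_n=(\alpha^n-\beta^n)/(\alpha-\beta)$, pass to the splitting field, extend $v_\mathfrak{p}$ to a prime above $\mathfrak{p}$ in the integral closure, and show $v(W_n)=v(n)=0$ via $W_n\equiv n\beta^{n-1}\pmod{\alpha-\beta}$. Both arguments hinge on the same decisive fact, that $\mathfrak{p}$ has positive polynomial degree and therefore cannot divide the integer $n$ (this is exactly where the classical ``lifting the exponent'' term $v_\mathfrak{p}(n)$ of the $\mathbb{Z}$-case disappears, as you rightly emphasize); yours trades the paper's explicit identities and the harmless factor $2^{n-1}$ for the standard but nontrivial existence of an extension of $v_\mathfrak{p}$ to $\mathscr{K}(a,b)$, and in exchange isolates the mechanism more transparently. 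One small blemish: in your treatment of the case $a=b$, the assertion that $2$ is a unit in a characteristic-zero polynomial ring is false (e.g.\ in $\mathbb{Z}[T]$); but it is also unnecessary, since $(2a)+(a^2)=R$ gives $1=a(2x+ay)$ directly, whence $a$ is a unit and your conclusion that $\rho_U(\mathfrak{p})$ cannot exist stands. (The paper does not need to separate this case, as its identities remain valid when $\Delta=0$.)
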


\begin{proof}
    Let $V=(V_n)_{n\geq 0} \subset R$ be the sequence defined by $V_n=a^n+b^n$ for all $n\geq 0$. It is called a {\it companion Lucas sequence}. We may prove the following formulas in any integral domain with zero characteristic:
     \begin{equation}\label{EdForm1}
        2^{n-1} U_{kn} = \sum_{i=0}^{\floor{(n-1)/2}} \binom{n}{2i+1} \Delta^i U_k^{2i+1} V_k^{n-2i-1},
    \end{equation}
    for all $m,n\geq 0$, and $V_n^2-\Delta U_n^2 = 4Q^n$ for all $n\geq 0$. These are classical formulas in the theory of Lucas sequences and proofs can be found in the book of Ballot and Williams \cite[Sect. 2.2, equations (2.28) and (2.8)]{BaWi} for $\mathbb{Z}$. This classical proof is valid in any commutative ring of characteristic different from $2$. Now, the formula $V_n^2-\Delta U_n^2 = 4Q^n$ shows that $(U_n,V_n)$ divides $4Q^n$ for all $n\geq 0$. It follows that $\mathfrak{p}$ is not a divisor of $V_\rho$ as $\mathfrak{p}\nmid Q$ by Lemma \ref{LemmaRanknmidQ}, and $\mathfrak{p}$ has positive degree. Putting $k=\rho$ in \eqref{EdForm1}, we obtain
    \[
    2^{n-1} U_{\rho n} \equiv n U_\rho V_\rho^{n-1} \pmod{U_\rho^2}.
    \]
    If $\lambda=v_\mathfrak{p}(U_\rho)$, then this implies
    \[
    2^{n-1} U_{\rho n} \equiv n U_\rho V_\rho^{n-1} \pmod{\mathfrak{p}^{\lambda+1}},
    \]
    but $\mathfrak{p}^\lambda \| n U_\rho V_\rho^{n-1}$ and we find that $v_\mathfrak{p}(U_{\rho n}) = \lambda$.
\end{proof}

\section{The main theorem}\label{Section3}

Let $A$ be a unique factorization domain with characteristic $p$. We consider the case of the polynomial ring $R=A[T]$. We use the letter $R$ as in the previous section because $R$ is a unique factorization domain. Let $K$ denote the field of fraction of $R$. In this section, we prove our main theorem on primitive prime divisors of Lucas sequences with parameters $P,Q\in R$. We assume that $P$ and $Q$ are non-zero, coprime and that one of $P$ or $Q$ has positive polynomial degree. These assumptions ensure that the characteristic polynomial of $U$ has distinct roots. Indeed, the roots are equal if the discriminant $\Delta=P^2-4Q$ is zero, which can not happen if $p=2$, since $P$ is non-zero. If $p\ne 2$, then $2\deg(P) = \deg(Q) \geq 1$, and $P$ and $Q$ can not be coprime. We follow a method of Yabuta \cite{Ya}.

Let $(Q_n)_{n\geq 0}$ be the sequence defined by $Q_0=Q_1=1$ and for all $n\geq 2$ by $Q_n:=Q_n(a,b)=\Phi_n(a,b)$, where $\Phi_n$ is the $n$-th homogeneous cyclotomic polynomial defined by
\[
\Phi_n(X,Y) = \prod_{\substack{1\leq k\leq n \\ (k,n)=1}} (X-\zeta_n^kY),
\]
for all $n\geq 1$. We have the following well-known identity:
\[
X^n-Y^n = \prod_{d\mid n} \Phi_d(X,Y) = (X-Y) \prod_{\substack{d\mid n \\ d>1}} \Phi_d(X,Y),
\]
which, applied to the Lucas sequence $U$, yields
\begin{equation}\label{U_n=ProdQ_n}
    U_n = \frac{a^n-b^n}{a-b} = \prod_{\substack{d\mid n \\ d>1}} \Phi_d(a,b) =\prod_{d\mid n} Q_d.
\end{equation}
We want to study the prime divisors of $U_n$ by looking at those of $Q_d$ for all $d\mid n$. However, we first need to check that $Q_n$ belongs to $R$ for all $n\geq 1$. It is true for $n=1$ and $n=2$ since $Q_1=1$ and $Q_2=P$. For $n\geq 3$, we first note that $\varphi(n)$ is even, where $\varphi$ is Euler's totient function. Moreover, following the method used in \cite[p.\,89]{BaWi}, we may prove that $Q_n(a,b)=Q_n(b,a)$. It follows that $Q_n\in K=\mathrm{Frac}(R)$. Indeed, let $L=K(a)$ be the splitting field of the characteristic polynomial of $U$, that is, $X^2-PX+Q$. It is a Galois extension of $K$ of degree $1$ or $2$. Hence $Q_n\in K$ if and only if $\sigma(Q_n)=Q_n$ for the non-trivial automorphism $\sigma$ of $L/K$, when it exists. If $L$ is a quadratic extension of $K$, then 
\[
\sigma(Q_n(a,b))= Q_n(\sigma(a),\sigma(b))=Q_n(b,a) = Q_n(a,b),
\]
because $\sigma$ sends $a$ to $b$. Thus $Q_n\in K$. To prove that $Q_n$ is integral, we show that $v_\mathfrak{p}(Q_n)\geq 0$ for all irreducible elements $\mathfrak{p}\in R$. By $\eqref{U_n=ProdQ_n}$, we have
\[
v_\mathfrak{p}(U_n)=\sum_{d\mid n} v_\mathfrak{p}(Q_d),
\]
and, using the M\" obius inversion formula, we obtain
\[{}
v_\mathfrak{p}(Q_n) = \sum_{d\mid n} \mu(n/d)v_\mathfrak{p}(U_d).
\]
In prime characteristic, Lemma \ref{LemmaUpn=} shows that it suffices to determine $v_\mathfrak{p}(Q_n)$ for all $n\geq 3$ not divisible by $p$. Indeed, if $n=mp^i$, we have
\[
U_n = \Delta^{\frac{p^i-1}{2}} U_m^{p^i},
\]
Therefore, since $\Delta\in R$, we have $Q_n\in R$ if and only if $Q_d \in R$ for all $d\mid m$ by \eqref{U_n=ProdQ_n}.
For simplification, we write $p\nmid n$ even in characteristic zero since it is equivalent to $n\geq 1$. If $\rho_U(\mathfrak{p})\nmid n$, then $v_\mathfrak{p}(Q_n)=0$ and we are done. Assume $\rho =\rho_U(\mathfrak{p})\mid n$. By Theorem \ref{THMvaluationLucas} if $p>0$ and Theorem \ref{ThmValp=0} if $p=0$, we have
\[
v_\mathfrak{p}(Q_n) = \sum_{d\mid n,\ \rho \mid d} \mu(n/d)v_\mathfrak{p}(U_d) =  v_\mathfrak{p}(U_\rho) \sum_{d' \mid \frac{n}{\rho}} \mu(n/\rho d'),
\]
for all $n\geq 3$, $p\nmid n$. The last sum is well-known to be equal to $1$ if $n/\rho=1$ and $0$ if $n/\rho>1$. Hence we just proved the following lemma:
\begin{lemma}\label{LemmaQnUn}
    We have $Q_n\in R$ for all $n\geq 1$. Moreover, if $p\nmid n$, then $\mathfrak{p}\mid Q_n$ if and only if $n=\rho_U(\mathfrak{p})$.
\end{lemma}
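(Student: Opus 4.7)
The plan is to prove both assertions of the lemma via a single $\mathfrak{p}$-adic valuation computation, handling small $n$ directly. For $n=1,2$ one has $Q_1 = 1$ and $Q_2 = P$, so membership in $R$ is immediate. For $n \geq 3$ I would first show $Q_n \in K = \mathrm{Frac}(R)$, and then that $v_\mathfrak{p}(Q_n) \geq 0$ for every irreducible $\mathfrak{p} \in R$, from which $Q_n \in R$ follows by the UFD property. The rationality step is a Galois argument: let $L = K(a)$, so $[L:K] \in \{1,2\}$, and if the extension is non-trivial its non-trivial automorphism swaps $a$ and $b$; it therefore suffices to verify $\Phi_n(a,b) = \Phi_n(b,a)$, which follows by pairing the factor indexed by $k$ with the one indexed by $n-k$ in the defining product, using that $\varphi(n)$ is even for $n \geq 3$.

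For the valuation computation, I would apply M\"obius inversion to \eqref{U_n=ProdQ_n} to obtain
\[
v_\mathfrak{p}(Q_n) = \sum_{d \mid n} \mu(n/d)\, v_\mathfrak{p}(U_d).
\]
I would first reduce to the case $p \nmid n$: in positive characteristic, writing $n = mp^i$ with $p \nmid m$, Lemma \ref{LemmaUpn=} expresses $U_n$ as $\Delta^{(p^i-1)/2} U_m^{p^i}$, and since $\Delta \in R$, an induction on $v_p(n)$ using the product formula propagates integrality of $Q_d$ (for $d$ with $p \nmid d$) up to all $Q_n$. Once restricted to $p \nmid n$, I split on whether $\rho := \rho_U(\mathfrak{p})$ divides $n$. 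If $\rho \nmid n$, then by Proposition \ref{PropPdivUrhodivn} every term $v_\mathfrak{p}(U_d)$ in the sum vanishes, so $v_\mathfrak{p}(Q_n) = 0$. If $\rho \mid n$, then Theorem \ref{THMvaluationLucas} (when $p > 0$) or Theorem \ref{ThmValp=0} (when $p = 0$) gives $v_\mathfrak{p}(U_d) = v_\mathfrak{p}(U_\rho)$ for every $d$ in the sum with $\rho \mid d$, so the sum collapses to
\[
v_\mathfrak{p}(Q_n) = v_\mathfrak{p}(U_\rho) \sum_{d' \mid n/\rho} \mu\!\left(\frac{n}{\rho d'}\right),
\]
which by the standard identity equals $v_\mathfrak{p}(U_\rho)$ if $n = \rho$ and $0$ otherwise. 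This simultaneously gives $v_\mathfrak{p}(Q_n) \geq 0$ (hence $Q_n \in R$) and the equivalence $\mathfrak{p} \mid Q_n \iff n = \rho_U(\mathfrak{p})$.

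The main obstacle I expect is the reduction to $p \nmid n$ in positive characteristic: integrality of $Q_n$ when $p \mid n$ cannot be read off directly from the valuation formula, because the intermediate $U_d$ with $p \mid d$ are themselves tied to $Q_n$ via the product. The cleanest resolution is that Lemma \ref{LemmaUpn=} together with \eqref{U_n=ProdQ_n} forces any $Q_n$ with $p \mid n$ to be a product of terms of the form $Q_d$ with $p \nmid d$ multiplied by a power of $\Delta \in R$, so integrality propagates upward once the case $p \nmid n$ is in hand.
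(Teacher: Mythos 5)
Your proposal is correct and follows essentially the same route as the paper: direct verification for $n=1,2$, the Galois/symmetry argument for $Q_n\in K$, M\"obius inversion of \eqref{U_n=ProdQ_n}, reduction to $p\nmid n$ via Lemma \ref{LemmaUpn=}, and the collapse of the M\"obius sum using Theorem \ref{THMvaluationLucas} or Theorem \ref{ThmValp=0} according to the characteristic. Even the subtlety you flag about propagating integrality to the case $p\mid n$ is resolved exactly as in the paper, via the factorization $U_n=\Delta^{(p^i-1)/2}U_m^{p^i}$ and the product formula.
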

Suppose $p\nmid n$. It follows from Lemma \ref{LemmaQnUn} that $U_n$ has no primitive prime divisor if and only if $Q_n$ is a constant in $R$, i.e., $Q_n\in A$. Now, assuming $n\geq 3$, we have $0<k<n/2$ only if $n-k>n/2$. Therefore, using the identities $a^2+b^2=P^2-2Q$ and $ab=Q$, we may write
\begin{align}
    Q_n &= \prod_{\substack{0<k<n/2 \\ (k,n)=1}} (a-\zeta_n^kb)(a-\zeta_n^{n-k}b) \nonumber \\ 
    &=\prod_{\substack{0<k<n/2 \\ (k,n)=1}} (a^2+b^2-(\zeta_n^k+\zeta_n^{n-k})ab ) \nonumber \\
    &= \prod_{\substack{0<k<n/2 \\ (k,n)=1}} (P^2-\theta_kQ), \label{QnProdFormula}
\end{align}
where $\theta_k=2+\zeta_n^k+\zeta_n^{-k}$. We work on the above product formula for $Q_n$ to prove our main theorem. Indeed, we prove that for all but finitely many $n\geq 3$, this product has a non-constant factor.

\begin{lemma}\label{Lemmaa+a-1}
    Let $\alpha,\beta\in R^\times$. We have $\alpha+\alpha^{-1} = \beta+\beta^{-1}$ if and only if $\alpha=\beta$ or $\alpha=\beta^{-1}$.
\end{lemma}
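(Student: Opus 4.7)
The plan is a straightforward algebraic manipulation, exploiting the fact that $R$ is an integral domain (being a UFD).

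For the reverse direction, both implications are immediate: if $\alpha=\beta$ the equality is trivial, while if $\alpha=\beta^{-1}$ then $\alpha^{-1}=\beta$, so $\alpha+\alpha^{-1}=\beta^{-1}+\beta$.

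For the forward direction, I would start from the assumption $\alpha+\alpha^{-1}=\beta+\beta^{-1}$ and, since $\alpha,\beta\in R^\times$, clear denominators by multiplying through by $\alpha\beta$. This yields
\[
\alpha^2\beta+\beta=\alpha\beta^2+\alpha,
\]
which I would rewrite as $\alpha\beta(\alpha-\beta)-(\alpha-\beta)=0$, i.e.,
\[
(\alpha-\beta)(\alpha\beta-1)=0.
\]
Since $R$ is an integral domain, one of the two factors must vanish, giving either $\alpha=\beta$ or $\alpha\beta=1$, the latter being equivalent to $\alpha=\beta^{-1}$.

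There is essentially no obstacle here: the only thing worth noting is that the manipulation is valid because $\alpha,\beta$ are units (so inverses exist and we may multiply by $\alpha\beta\ne 0$), and that the conclusion relies on $R$ having no zero divisors, which is guaranteed by the UFD hypothesis maintained throughout the section.
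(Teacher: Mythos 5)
Your proof is correct and is in substance the same as the paper's: the paper phrases the computation via the polynomial $x^2-(\alpha+\alpha^{-1})x+1$ having both $\alpha$ and $\beta$ as roots and then cancels a factor of $\alpha-\beta$, which amounts to exactly your factorization $(\alpha-\beta)(\alpha\beta-1)=0$ followed by the integral-domain argument. Your version is, if anything, slightly cleaner in making the two factors explicit at once.
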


\begin{proof}
    The ``if'' part is trivial. For the converse, note that the polynomial $f(x)=x^2-(\alpha+\alpha^{-1})x+1$ annihilates both $\alpha$ and $\beta$ because $\alpha+\alpha^{-1}=\beta+\beta^{-1}$. We have
    \begin{align*}
        f(\alpha)=f(\beta) &\iff \alpha^2-\alpha(\alpha+\alpha^{-1})+1=\beta^2-\beta(\alpha+\alpha^{-1})+1 \\
        &\iff \alpha^2-\beta^2 = (\alpha-\beta)(\alpha+\alpha^{-1}).
    \end{align*}
    If $\alpha\ne \beta$, we may divide by $\alpha-\beta$ and we obtain $\alpha+\beta=\alpha+\alpha^{-1}$, or equivalently, $\alpha=\beta^{-1}$.
\end{proof}

\begin{theorem}
    Let $A$ be a unique factorization domain and $R=A[T]$. Let $U$ be a regular Lucas sequence with non-zero parameters $P,Q\in R$ such that one of $P$ and $Q$ has positive degree. Assume $n\geq 2$ and $p\nmid n$. Then $U_n$ has no primitive prime divisor if and only if there exists a non-zero constant $\lambda \in A$ such that one of the following holds:
    \begin{enumerate}[label=(\arabic*)]
        \item $n=2$, $P=\lambda$ and $\deg{(Q)}\geq 1$,

        \item $n=3$ and $P^2=Q+\lambda$,

        \item $n=4$ and $P^2=2Q+\lambda$,

        \item $n=6$ and $P^2=3Q+\lambda$.
    \end{enumerate}
    In particular, we see that at most one of the above can hold.
\end{theorem}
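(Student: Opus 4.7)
The plan is to combine the translation established just before the statement---that for $p\nmid n$, $U_n$ has no primitive prime divisor if and only if $Q_n\in A$---with a direct analysis of the factorisation \eqref{QnProdFormula}. Non-vanishing of $U_n$ by non-degeneracy automatically makes any such $\lambda=Q_n$ non-zero.

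The case $n=2$ is immediate: $Q_2=\Phi_2(a,b)=a+b=P$, so $Q_2\in A$ precisely when $P$ is a constant, and the hypothesis that one of $P,Q$ has positive degree then forces $\deg(Q)\geq 1$; this is condition (1). For $n\in\{3,4,6\}$ one has $\varphi(n)/2=1$, so the product \eqref{QnProdFormula} reduces to a single factor $P^2-\theta_1 Q$ with $\theta_1=2+\zeta_n+\zeta_n^{-1}$. Using $\zeta_3+\zeta_3^{-1}=-1$, $\zeta_4+\zeta_4^{-1}=0$ and $\zeta_6+\zeta_6^{-1}=1$ gives $\theta_1=1,2,3$ respectively, so the requirement that $Q_n$ be a non-zero constant is exactly conditions (2), (3), (4).

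The crux is the converse for $n\geq 5$, $n\neq 6$, $p\nmid n$, where $Q_n\notin A$ must be shown. In this range $\varphi(n)/2\geq 2$, so \eqref{QnProdFormula} has at least two factors. Since $p\nmid n$, $\zeta_n$ has exact order $n$ in $\Bar{K}$, so for distinct indices $k,k'$ in the product range we have $\zeta_n^k\neq\zeta_n^{\pm k'}$; Lemma \ref{Lemmaa+a-1} applied to $\alpha=\zeta_n^k$, $\beta=\zeta_n^{k'}$ then shows the $\theta_k$'s are pairwise distinct in $\Bar{K}$. Assume for contradiction that $Q_n\in A$. Viewing \eqref{QnProdFormula} inside the integral domain $\Bar{K}[T]$ and comparing $T$-degrees forces each factor $P^2-\theta_k Q$ to be constant in $T$, say $P^2-\theta_k Q=\mu_k\in\Bar{K}$. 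Subtracting two such equations for $k\neq k'$ yields $(\theta_{k'}-\theta_k)Q=\mu_k-\mu_{k'}$, so $Q$ is constant in $T$ and hence $Q\in A$. Then $P^2=\theta_k Q+\mu_k\in\Bar{K}$, so $P^2$ is constant in $T$; examining the leading $T$-coefficient of $P^2$, which is the square of that of $P$ and non-zero in the integral domain $A$, forces $\deg(P)=0$, so $P\in A$. This contradicts the hypothesis that one of $P,Q$ has positive degree.

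For the ``at most one'' clause, pairwise incompatibility is direct: if two of (1)--(4) held for the same $(P,Q)$, subtracting the corresponding equations between $P^2$ and $Q$ (reading (1) as $P^2=\lambda^2$) produces $cQ=\mathrm{constant}$ for some $c\in\{1,2,3\}$, and the characteristic conditions $p\nmid n$ attached to the two indices $n$ rule out $p\mid c$ (the appearance of $c=2$ entails one index in $\{4,6\}$, hence $p\neq 2$, and $c=3$ entails $n=6$, hence $p\neq 3$). Thus $Q\in A$, then $P^2\in A$, hence $P\in A$, again contradicting the positive-degree hypothesis. The main delicate step is the degree comparison after base change to $\Bar{K}[T]$, which is clean precisely because $p\nmid n$ preserves distinctness of the $\theta_k$'s.
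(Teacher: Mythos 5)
Your proof is correct and follows essentially the same route as the paper's: reduce to the criterion $Q_n\in A$ via Lemma \ref{LemmaQnUn}, use the product formula \eqref{QnProdFormula} together with Lemma \ref{Lemmaa+a-1} to show the $\theta_k$ are pairwise distinct so that at most one factor $P^2-\theta_k Q$ can be constant, and treat the cases with $\varphi(n)\leq 2$, i.e., $n\in\{2,3,4,6\}$, explicitly. The only differences are organizational: you avoid the paper's preliminary case split on $\deg(P^2)$ versus $\deg(Q)$ by deriving directly that if every factor were constant then both $P$ and $Q$ would be constant, and you spell out the ``at most one'' clause, which the paper merely asserts.
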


\begin{proof}
    The case $n=2$ yields $U_2=P=\lambda$. This is clearly sufficient. For $n\geq 3$, it suffices to check whether $Q_n$ belongs to $A$ by Lemma \ref{LemmaQnUn}. By \eqref{QnProdFormula}, we want to show that at least one of the $\varphi(n)/2$ factors $P^2-\theta_kQ$ has positive degree, where $0<k<n/2$ and $(k,n)=1$. If $\deg(P^2)\ne \deg(Q)$, then $\deg(P^2-\theta_kQ)$ is equal to the maximum of $\deg(P^2)$ and $\deg(Q)$, which is positive. Thus $Q_n\not \in A$ and $U_n$ has a primitive divisor by Lemma \ref{LemmaQnUn}. Assume $2\deg{(P)}= \deg{(Q)}$ and $\varphi(n)>2$. Then $Q_n$ has a least two factors of the form $P^2-\theta_k Q$. By contradiction, assume that there exist $i$ and $j$, $i\ne j$, such that both
    \[
    P^2-\theta_iQ \quad \text{and} \quad P^2-\theta_jQ
    \]
    are constants, say $\lambda_i$ and $\lambda_j$. We obtain $Q(\theta_j-\theta_i)=\lambda_i-\lambda_j$. However, we have $\deg(\lambda_i-\lambda_j)\leq 0$ and $\deg(Q)\geq 1$, since $P$ and $Q$ are not both constants by hypothesis. It follows that $\lambda_i=\lambda_j$ and $\theta_i=\theta_j$. The latter is equivalent to $\zeta_n^i+\zeta_n^{-i} = \zeta_n^j+\zeta_n^{-j}$. By Lemma \ref{Lemmaa+a-1}, we either have $\zeta_n^i=\zeta_n^j$ or $\zeta_n^i=\zeta_n^{-j}$. Hence $n\mid i-j$ or $n\mid i+j$, but $0<i,j<n/2$ implies that $i=j$, a contradiction. It follows that one and only one factor of $Q_n$ in \eqref{QnProdFormula} can be constant. Since $\varphi(n)>2$, $\varphi(n)$ is even and $Q_n$ has $\varphi(n)/2$ factors in $\eqref{QnProdFormula}$, we find that $\deg(Q_n)\geq 1$. The remaining cases for $2\deg{(P)}= \deg{(Q)}$ are integers $n\geq 3$ such that $\varphi(n)=2$, that is, $n=3,4$ and $6$. Note that we have the following:
    \[
    U_3=P^2-Q, \quad U_4=U_2(P^2-2Q), \quad \text{and} \quad U_6=U_2U_3(P^2-3Q).
    \]
    By \eqref{U_n=ProdQ_n}, we obtain that $Q_3=P^2-Q$, $Q_4=P^2-2Q$, and $Q_6=P^2-3Q$. The result follows.
\end{proof}

In conclusion, we obtained that the greatest integer $n\geq 1$, $p\nmid n$, for which $U_n$ has no primitive divisor is at most equal to $6$. For each $1\leq n \leq 6$, we found the conditions on $P$ and $Q$ for $U_n$ not to have
a primitive divisor. Note that the method we used is enough to obtain a similar result for sequences of the form $(a^n-b^n)_{n\geq 0}$, with $a,b\in R$, and for Lehmer sequences in $R$.

\end{document}